\newtheorem{theorem}{Theorem}[section]
\newtheorem{lemma}[theorem]{Lemma}
\newtheorem{proposition}[theorem]{Proposition}
\theoremstyle{definition}
\newtheorem{definition}[theorem]{Definition}
\newtheorem{remark}[theorem]{Remark}
\title{{\bf An Osserman-type condition on $g.f.f$-manifolds with Lorentz metric}
\footnote{The author wishes to express her thanks to professors A.M. Pastore and M. Falcitelli for helpful comments and for many stimulating conversations. The work was supported by the Research Program n.\ 01.08 of University of Bari}}
\author{Letizia Brunetti}
\date{}
\begin{document}

\thispagestyle{empty}

\maketitle

\begin{abstract}
A condition of Osserman type, called $\varphi$-null Osserman condition, is introduced and studied in the context of Lorentz globally framed $f$-manifolds. An explicit example shows the naturalness of this condition in the setting of Lorentz $\mathcal{S}$-manifolds. We prove that a Lorentz $\mathcal{S}$-manifold with constant $\varphi$-sectional curvature is $\varphi$-null Osserman, extending a result stated for Lorentz Sasaki space forms. Then we state a characterization for a particular class of $\varphi$-null Osserman $\cal{S}$-manifolds. Finally, some examples are examined.
\end{abstract}
\textbf{2000 Mathematics Subject Classification.} 53C25, 53C50, 53B30. \\
\textbf{Keywords and phrases.} Lorentz metrics, Osserman condition, $g.f.f$-structure.

\section{Introduction}
The study of the behaviour of the Jacobi operators is an important topic in Riemannian and, more generally, in semi-Riemannian geometry. More precisely, let $(M,g)$ be a Riemannian manifold with curvature tensor $R$ and consider a point $p$ in $M$. For any unit vector $X\in T_p M$, the symmetric endomorphism $R_X=R_p(\cdot,X)X:X^\bot\rightarrow X^\bot$ is called the Jacobi operator with respect to $X$. If the eigenvalues of $R_X$ are independent of the choices of $X$ and $p$, one says that $(M,g)$ is an Osserman manifold (\cite{GKV}).

Several results have been obtained looking for the solution of the Osserman Conjecture (\cite{C0,Os}), which states that an Osserman manifold is flat or it is locally a rank-one symmetric space (\cite{C0,C1,C2,N01,N02,N03}). Osserman manifolds have been studied in the Lorentzian context (\cite{BBG,GKV0,GKV1}), where a complete solution for the Osserman conjecture has been found. Recently, in \cite{AD}, Atindogbe and Duggal have introduced and studied suitable operators of Jacobi type associated with a semi-Riemannian degenerate metric. 

In (\cite{GKV1}) the authors defined the Jacobi operator $\bar{R}_u$, $u$ being a null (or lightlike) vector tangent to a Lorentz manifold $M$. Given a unit timelike vector $z$ tangent to $M$, they introduced and investigated the so-called null Osserman condition with respect to $z$ (see also \cite{GKV}).

Obviously, Lorentz almost contact manifolds are studied in this context. In particular, a Lorentz Sasaki space form, whose characteristic vector field $\xi$ is timelike, is globally null Osserman with respect to $\xi$ (\cite{GKV}). This result does not hold in the context of Lorentz globally framed $f$-manifolds $(M^{2n+s},\varphi,\xi_\alpha,\eta^\alpha,g)$, $s\geq 2$, as we will see with a counterexample.

This motivates the introduction of a more general condition of Osserman type, which we will call \emph{$\varphi$-null Osserman condition}.

The main results of this paper state the links between the $\varphi$-null Osserman condition and the behaviour of the $\varphi$-sectional curvature in Lorentz $\mathcal{S}$-manifolds. After a preliminary section, where we gather some facts about $g.f.f$-manifolds, needed in the rest of the paper, in Section 3 we discuss the relationship between the null Osserman condition and the Lorentz $\mathcal{S}$-structures, giving an example of Lorentz $\mathcal{S}$-space form which does not satisfy the null Osserman conditions. We endow the compact Lie group $U(2)$ with a Lorentz $\cal{S}$-structure of rank $2$. This manifold is an $\mathcal{S}$-space form with two characteristic vector fields $\xi_1$ and $\xi_2$, $\xi_1$ timelike, that does not satisfy the null Osserman condition with respect to $\xi_1$.

In Section 4 we introduce the notion of $\varphi$-null Osserman manifold, and we state that a Lorentz $\cal{S}$-manifold with constant $\varphi$-sectional curvature is $\varphi$-null Osserman with respect to the timelike characteristic vector field. We prove, in Section 5, an algebraic characterization for the Riemannian curvature tensor field in a particular class of $\varphi$-null Osserman Lorentz $\mathcal{S}$-manifolds. Namely, we divide this section in two parts. In the first subsection we deal with technical results which are very useful in the second subsection where we state the main result. Moreover, we look at the behaviour of the $\varphi$-sectional curvature when the number of the eigenvalue of the Jacobi operator is one.

In particular, it is interesting to note that the existence of the only eigenvalue $1$ of the Jacobi operator is related to the $\varphi$-sectional flatness of the manifold.  

Finally in the case of $4$-dimensional $\varphi$-null Osserman manifolds we find a compact example, using the Lie group $U(2)$, and also a non compact example. 
   
All manifolds, tensor fields and maps are assumed to be smooth, moreover we suppose all manifolds are connected. We will use the Einstein convention omitting the sum symbol for repeated indexes. Following the notations of S. Kobayashi and K. Nomizu (\cite{KN}), for the curvature tensor
$R$ we have $R(X,Y)Z = \nabla_X\nabla_Y Z - \nabla_Y \nabla_X Z - \nabla_{[X,Y ]}Z$, and $R(X,Y,Z,W) =
g(R(Z,W)Y,X)$, for any $X,Y,Z,W\in\mathfrak{X}(M)$.
The \emph{sectional curvature} $K_{p}(\pi)$ at $p$ of a non-degenerate $2$-plane $\pi=span\{  X,Y\}  $ is given by
\[
K_{p}(  \pi)=K_p(X,Y)  =\frac{{R}_{p}(  X,Y,X,Y)  }%
{\Delta(\pi)}=\frac{{g}_{p}(  {R}_{p}(
X,Y)Y  ,X)  }{\Delta(\pi) } , \]
where $\Delta(\pi)=g(X,X)g(Y,Y)-g(X,Y)^2\neq 0$.

\section{Preliminaries}
Following \cite{Bl1,LP,T}, we recall some definitions. An almost contact manifold is a $(2n+1)$-dimensional manifold $M$ endowed with an almost contact structure, i.e. $M^{2n+1}$ has a $(1,1)$-tensor field $f$ such that $rank(f)=2n$, a $1$-form $\eta$ and a vector field $\xi$ satisfying $f^2(X) =-X +\eta(X)\xi$ and $\eta(\xi) = 1$. Moreover, if $g$ is a semi-Riemannian metric on $M^{2n+1}$ such that, for any $X,Y\in\mathfrak{X}(M^{2n+1})$,
$$g(fX,fY) = g(X,Y)-\varepsilon\eta(X)\eta(Y),$$
where $\varepsilon =\pm1$ according to the causal character of $\xi$, then $M^{2n+1}$ is called an indefinite almost contact manifold. Such a manifold is said to be an indefinite contact manifold if $d\eta=\Phi$, where $\Phi$ is defined by $\Phi(X,Y)=g(X,fY)$. Furthermore, if the structure $(f,\xi,\eta)$ is normal, that is $N=[f,f]+2d\eta\otimes\xi=0$, then the indefinite contact structure is called an indefinite Sasaki structure and, in this case, the manifold $(M^{2n+1},f,\xi,\eta,g)$ is called indefinite Sasaki.

In the Riemannian case a generalization of these structures have been studied by Blair in \cite{Bl}, by Goldberg and Yano in \cite{GY}. In \cite{LP} we studied such structures in semi-Riemannian context. 
  
A manifold $M$ is called a globally framed $f$-manifold (briefly $g.f.f$-manifold) if it is endowed with a nowhere-vanishing $(1,1)$-tensor field $\varphi$ of constant rank, such that $\ker \varphi$ is parallelizable i.e. there exist global vector fields  $\xi_\alpha$, $\alpha\in\{1,\ldots,s\}$, and $1$-forms $\eta^\alpha$, satisfying 
\[
\varphi^{2}=-I+\eta^\alpha\otimes\xi_\alpha \text{ and } \eta^\alpha(\xi_\beta)=\delta_\beta^\alpha.
\]

A $g.f.f$-manifold $(M^{2n+s},\varphi,\xi_\alpha,\eta^\alpha)$, $\alpha\in \{1,\ldots,s\}$, is said to be an indefinite $g.f.f$-manifold if $g$ is a semi-Riemannian metric satisfying the following compatibility condition
\begin{equation*}
g(\varphi X,\varphi Y)=g(X,Y)-\varepsilon_\alpha\eta^\alpha(X)\eta^\alpha(Y)
\end{equation*}
for any vector fields $X, Y$,  being $\varepsilon_\alpha=\pm 1$ according to whether $\xi_\alpha$ is
spacelike or timelike. Then, for any $\alpha\in\{1,\ldots,s\}$ and $X\in\mathfrak{X}(M^{2n+s})$, one has $\eta^\alpha(X)=\varepsilon_\alpha g(X,\xi_\alpha)$.

An indefinite $g.f.f$-manifold is an indefinite $\mathcal{S}$-manifold if it is normal and $d\eta^\alpha=\Phi$, for any $\alpha\in\{1,\ldots,s\}$, where $\Phi(X,Y)=g(X,\varphi Y)$ for any $X,Y\in \mathfrak{X}(M^{2n+s})$. The normality condition is expressed by the vanishing of the tensor field $N=N_\varphi+2d\eta^\alpha\otimes \xi_\alpha$, $N_{\varphi}$ being the Nijenhuis torsion of $\varphi$. 

\noindent Furthermore, as proved in \cite{LP}, the Levi-Civita connection of an indefinite $\mathcal{S}$-manifold satisfies:
\begin{equation*}
(\nabla_X\varphi)Y= g(\varphi X,\varphi Y) \widetilde{\xi}+\widetilde{\eta}(Y)\varphi^{2}(X),
\end{equation*}
where $\widetilde{\xi} = \sum_{\alpha=1}^s \xi_\alpha$ and $\widetilde{\eta} =\varepsilon_\alpha\eta^\alpha$.
Note that, for $s=1$, we reobtain the notion of indefinite Sasaki manifold.

We recall that $\nabla_X \xi_\alpha=-\varepsilon_\alpha\varphi X$ and $\ker \varphi $ is an integrable flat distribution since $\nabla_{\xi_\alpha} \xi_\beta =0$, for any $\alpha,\beta\in\{1,\ldots,s\}$. Anyway, an indefinite $\mathcal{S}$-manifold is never flat and it is never a real space form since, for example,  $K(X,\xi_\alpha)=\varepsilon_\alpha$ for any non lightlike $X\in\operatorname{Im}\varphi_p$.

For more details we refer to \cite{LP}, where we describe three examples of non compact indefinite $\mathcal{S}$-manifolds. More precisely we construct two different indefinite $\mathcal{S}$-structures with metrics of index $\nu=2$ on $\mathbb{R}^6$ and an indefinite $\mathcal{S}$-structure with Lorentz metric on $\mathbb{R}^4$. Moreover, in \cite{LP2} we give explicit examples of compact indefinite $g.f.f$-manifolds and indefinite $\mathcal{S}$-manifolds.

We also remark that every $g.f.f$-manifold is subject to the following topological condition: it has to be either non compact or compact with vanishing Euler characteristic, since it admits never vanishing vector fields. This implies that such a $g.f.f$-manifold always admits Lorentz metrics.

Let us fix few notation about curvature tensor field. As usual, a $2$-plane $\pi=span\{ X,{\varphi}X\}$ in $T_{p}{M}$, with $p\in{M}$ and $X\in\operatorname{Im}\varphi_{p}$, is said to be a ${\varphi}$-\emph{plane} and the sectional curvature at $p$ of such a plane, with $X$ a non lightlike vector, is called the ${\varphi}$-\emph{sectional curvature} at $p$ and is denoted by $H_{p}(X)$.

An indefinite $\mathcal{S}$-manifold $(M,\varphi,\xi_{\alpha},\eta^{\alpha},g)$ is said to be an indefinite $\cal{S}$-space form if the ${\varphi}$-sectional curvature $H_{p}(X)$ is constant, for any point and any ${\varphi}$-plane. In particular, in \cite{LP} it is proved that an indefinite $\mathcal{S}$-manifold $(M,\varphi,\xi_{\alpha},\eta^{\alpha},g)$ is  an indefinite $\cal{S}$-space form with $H_{p}(X)=c$ if and only if the Riemannian
$(0,4)$-type curvature tensor field ${R}$ is given by
\begin{align}
	{R}(X,Y,Z,W)&=-\frac{c+3\varepsilon}{4}\{{g}({\varphi}Y,{\varphi}Z){g}({\varphi}X,{\varphi}W)-{g}(  {\varphi}X,{\varphi}Z){g}({\varphi}Y,{\varphi}W)\} \label{equivalente}\\
&\quad-\frac{c-\varepsilon}{4}\{\Phi(W,X)
\Phi(Z,Y)-\Phi(Z,X)\Phi(W,Y)  \nonumber\\
&  \quad+2\Phi(X,Y)\Phi(W,Z)
\} -\{\widetilde{\eta}(W)\widetilde{\eta}(X){g}({\varphi}Z,{\varphi}Y) \nonumber\\
&  \quad  -\widetilde{\eta}(W)\widetilde{\eta}(Y)
{g}({\varphi}Z,{\varphi}X)+\widetilde{\eta}(Y)\widetilde{\eta}(Z){g}({\varphi}W,{\varphi}X)\nonumber\\
&  \quad-\widetilde{\eta}(Z)\widetilde{\eta}(X)  {g}({\varphi}W,{\varphi}Y)\},\nonumber
\end{align}
for any vector fields $X$, $Y$, $Z$ and $W$ on $M$, where $\varepsilon=\sum_{\alpha=1}^s \varepsilon_\alpha$.

In regard to the curvature tensor of an indefinite $\mathcal{S}$-manifold, it is important to recall the following formulas, for any $X,Y,Z,W\in\operatorname{Im}\varphi$ and any $\alpha,\beta,\gamma,\delta\in\{1,\ldots,s\}$:
\begin{align}
	&R(X,\xi_\alpha,X,Y)=\varepsilon_\alpha g(X,X)g(\widetilde{\xi},Y)=0,\nonumber\\
	&R(\xi_\alpha,X,\xi_\beta,Y)=\varepsilon_\alpha\varepsilon_\beta g(X,Y),\nonumber\\
	&R(\xi_\alpha,X,\xi_\beta,\xi_\gamma)=\varepsilon_\alpha\varepsilon_\beta g(X,\xi_\gamma)=0,\label{PropS}\\
	&R(\xi_\alpha,\xi_\delta,\xi_\beta,\xi_\gamma)=0,\nonumber\\
	&R(X,Y,\varphi Z, W)+R(X,Y,Z,\varphi W)=\varepsilon P(X,Y;Z,W),\nonumber
\end{align}
where $P(X,Y;Z,W)=\Phi(X,Z)g(Y,W)-\Phi(X,W)g(Y,Z)-\Phi(Y,Z)g(X,W)+\Phi(Y,W)g(X,Z)$.

Finally, we recall some useful properties for a curvature-like algebraic tensor. Let $(V,g)$ be a pseudo-Euclidean real vector space of index $\nu$, $0<\nu<\dim V$. A multilinear map $F:V^4\rightarrow \mathbb{R}$ is called a curvature-like map (or curvature-like algebraic tensor) if it satisfies the following conditions
\begin{align*}
	F(y,x,z,w)&=-F(x,y,z,w),\\
	F(z,w,x,y)&=F(x,y,z,w),\\
	F(x,y,z,w)&+F(x,z,w,y)+F(x,w,y,z)=0.
\end{align*}
For any non-degenerate $2$-plane $\pi=span\{z,w\}$ in $V$ it is possible to define the number
\[
k(z,w)=\frac{F(z,w,z,w)}{\Delta(\pi)}.
\]
If $k(z,w)$ is constant for any non-degenerate $2$-plane and $k(z,w)=k$ then one gets $F(x,y,z,w)=k \left(g(x,z)g(y,w)-g(y,z)g(x,w)\right)$.
Now, arguments similar to those in Proposition 28 (\cite[page 229]{O'N}), can be used to prove the following result. 
\begin{lemma}\label{2pianidegeneri}
Let $(V,g)$ be a Lorentz real vector space and $F:V^4\rightarrow \mathbb{R}$ a curvature-like map. Then the following conditions are equivalent.
\begin{itemize}
	\item [a)]$F(x,y,z,w)=k \left(g(x,z)g(y,w)-g(y,z)g(x,w)\right)$,
	\item [b)] $F(x,y,y,x)=0$ for any degenerate plane $\pi=span\{x,y\}$ in $V$.
\end{itemize}
\end{lemma}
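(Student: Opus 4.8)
The plan is to prove the two implications separately; (a)$\Rightarrow$(b) is immediate and (b)$\Rightarrow$(a) carries all the work. For (a)$\Rightarrow$(b): if $F(x,y,z,w)=k\big(g(x,z)g(y,w)-g(y,z)g(x,w)\big)$, then specializing $z=y$, $w=x$ gives $F(x,y,y,x)=k\big(g(x,y)^{2}-g(x,x)g(y,y)\big)=-k\,\Delta(\pi)$, which vanishes whenever $\pi=\operatorname{span}\{x,y\}$ is degenerate.

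For (b)$\Rightarrow$(a) we may assume $\dim V\ge 3$, since for a Lorentz space with $\dim V\le 2$ a curvature-like map has a single independent component and is automatically a multiple of $g(x,z)g(y,w)-g(y,z)g(x,w)$. The strategy is to show that the number $k(z,w)=F(z,w,z,w)/\Delta(\pi)$ is the \emph{same} for every non-degenerate $2$-plane $\pi=\operatorname{span}\{z,w\}$; once this is known, the polarization identity recalled just before the lemma yields the formula in (a). The first step is to polarize the hypothesis along a fixed null direction: given a null vector $u$, the map $(x,y)\mapsto F(u,x,y,u)$ is a \emph{symmetric} bilinear form on $u^{\perp}$ — symmetry follows from the pair-symmetry together with the two skew-symmetries of $F$ — and it vanishes on the diagonal, because $\operatorname{span}\{u,x\}$, when it is a genuine $2$-plane, is degenerate (and the case $x\in\mathbb{R}u$ is trivial). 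Hence $F(u,x,y,u)=0$ for every null $u$ and all $x,y\in u^{\perp}$. The second step uses the Lorentz signature to upgrade this to full constancy of $k$: fix a hyperbolic pair $u,u'$ (null vectors with $g(u,u')=1$) and a $g$-orthonormal basis $e_{1},\dots,e_{m-2}$ of $\{u,u'\}^{\perp}$, where $m=\dim V$, and apply the identity from the first step to the one-parameter families of null vectors $u_{t}=u+t\,e_{i}-\tfrac{t^{2}}{2}u'$ (and to the analogous families obtained by interchanging $u$ and $u'$, or by replacing $e_{i}$ with $e_{i}+e_{j}$), choosing in each case two convenient vectors of $u_{t}^{\perp}$. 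Comparing coefficients of the powers of $t$ in the resulting polynomial identities $F(u_{t},x,y,u_{t})=0$ forces, one after another, every component of $F$ in the basis $\{u,u',e_{1},\dots,e_{m-2}\}$ that is not already determined by the algebraic symmetries of $F$, and in particular shows that $-F(e_{i},e_{j},e_{j},e_{i})$ is independent of the pair $i\ne j$ and agrees with the value of $k(z,w)$ on timelike planes; thus $k(z,w)$ equals a single constant $k$, and (a) follows.

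I expect the main obstacle to be the bookkeeping in the second step: there are many components of $F$ to track, and one must arrange the families of null vectors and the auxiliary spacelike directions so that in the end every non-degenerate plane — spacelike and timelike alike — is accounted for. This is precisely where the Lorentz hypothesis is used, and why the argument is modelled on Proposition~28 of O'Neill: the orthogonal complement of a null vector is a degenerate hyperplane whose spacelike part, taken together with the hyperbolic pair, is rich enough to pin down $F$ completely.
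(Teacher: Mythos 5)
Your proposal is correct and follows essentially the same route the paper itself indicates: the lemma is not proved in the text but delegated to ``arguments similar to those in Proposition 28'' of O'Neill, i.e.\ exactly your scheme --- (a)$\Rightarrow$(b) by specialization, and (b)$\Rightarrow$(a) by polarizing along a null direction to get $F(u,x,y,u)=0$ for all $x,y\in u^{\perp}$ and then using the Lorentz signature, via families of null vectors, to pin down the remaining components before invoking the polarization identity recalled just before the lemma. A spot-check of your second step confirms it works as claimed: with $u_{t}=u+t\,e_{i}-\tfrac{t^{2}}{2}u'$ and $x=y=e_{j}$, $j\neq i$, the coefficients of $t,t^{2},t^{3}$ in $F(u_{t},e_{j},e_{j},u_{t})=0$ give $F(u,e_{j},e_{j},e_{i})=0$, $F(e_{i},e_{j},e_{j},e_{i})=F(u,e_{j},e_{j},u')$ (hence independence of $i$), and $F(u',e_{j},e_{j},e_{i})=0$, which is precisely the kind of relation needed to conclude.
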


\section{Null Osserman condition and Lorentz $\mathcal{S}$-manifolds}
It is well-known that a Lorentz manifold has constant sectional curvature at a point $p$ if and only if it satisfies the Osserman condition at $p$.

Contrary to this, no Lorentz $\mathcal{S}$-manifold can satisfy the Osserman condition since, as remarked in Section 2, a Lorentz $\mathcal{S}$-manifold can not have constant sectional curvature. 

In \cite{GKV1} the authors introduce another Osserman condition, named the null Osserman condition.
Namely, let $(M,g)$ be a Lorentz manifold, $p\in M$ and $u$ a null vector in $T_p M$. Then the orthogonal complement $u^\bot$ of $u$ is a degenerate vector space since $span\{u\}\subset u^\bot$. So one considers the quotient space $\bar{u}^\bot=u^\bot/span\{u\}$ and the canonical projection $\pi:u^\bot\rightarrow \bar{u}^\bot$. It is possible to define a positive definite inner product $\bar{g}$ on $\bar{u}^\bot$ putting
\[
\bar{g}(\bar{x},\bar{y})=g(x,y),
\]
where, for any $x,y\in u^\bot$, $\bar{x}=\pi(x)$ and $\bar{y}=\pi(y)$.

From now on, every bar-object will stand for geometrical objects related to $\bar{u}^\bot$. 
So, fixed a null vector $u\in T_p M$, the Jacobi operator with respect to $u$ can be defined by the linear map $\bar{R}_u: \bar{u}^\bot\rightarrow\bar{u}^\bot$ such that $\bar{R}_u \bar{x}=\pi(R(x,u)u)$ (\cite{GKV1} and Definition 3.2.1 in \cite{GKV}).

Clearly, $\bar{R}_u$ is self-adjoint with respect to $\bar{g}$, hence $\bar{R}_u$ is diagonalizable.

In Lorentzian geometry it is well-known that a null vector $u$ and a timelike vector $z$ are never orthogonal. Hence, in a Lorentz manifold $(M,g)$, the null congruence set determined by a timelike vector $z\in T_p M$ at $p$, denoted by $N(z)$, is defined  by 
\[
N(z)=\{u\in T_pM \; |\; g(u,u)=0, \;g(u,z)=-1\}.
\] 
A Lorentz manifold $(M,g)$ is called null Osserman with respect to a unit timelike vector $z\in T_p M$ at a point $p$ if the characteristic polynomial of $\bar{R}_u$ is independent of $u\in N(z)$. Let $L$ be a timelike line subbundle of $TM$. If $(M,g)$ is null Osserman with respect to each unit timelike vector $z\in L$, then $(M,g)$ is called pointwise null Osserman with respect to $L$. Moreover, if $(M,g)$ is pointwise null Osserman with respect to $L$ and the characteristic polynomial of $\bar{R}_u$ is independent of the choice of a unit $z\in L$, then $(M,g)$ is said to be globally null Osserman with respect to $L$.

Another set associated to a unit timelike vector $z$ in $T_p M$ is the celestial sphere $S(z)$ of $z$ given by
\[
S(z)=\{x\in z^\bot\ |\ g(x,x)=1\}.
\]

According to a result in \cite{GKV}, using the celestial sphere of $z$, one can obtain all the elements of $N(z)$. In fact one has
\[
\forall\, u\in N(z) \ \exists |\, x \in S(z) \ \text{such}\ \text{that}\ u=z+x.
\]
It is very natural to use this definition in the context of Lorentz contact manifolds. In particular, as stated in \cite{GKV},  Lorentz Sasaki space forms are globally null Osserman with respect to the timelike characteristic vector field. An easy example shows that in a Lorentz $\mathcal{S}$-space form the null Osserman condition with respect to a timelike characteristic vector does not hold.

Indeed, considering the $4$-dimensional manifold $U(2)$ and the Lie algebra $\mathfrak{u}(2)$, we denote by $\xi_1,\xi_2,X,Y$ the left-invariant vector fields on $U(2)$, determined, in the same order, by the basis $\{\imath E_{11}, -\imath E_{22}, E_{12}-E_{21}, \imath(E_{12}+E_{21})\}$ of $\mathfrak{u}(2)$, where $(E_{ij})_{i,j \in \{1,2\}}$ is the canonical basis of $gl(2,\mathbb C)$. Then, we get:
\[
[X,Y]=2\xi_1+2\xi_2,\quad [X,\xi_\alpha]=-Y, \quad [Y,\xi_\alpha]=X, \quad [\xi_\alpha,\xi_\beta]=0
\]
for any $\alpha,\beta\in\{1,2\}$.
Let us consider the left-invariant $1$-forms $\eta^1$ and $\eta^2$  determined by the dual $1$-forms of $\imath E_{11}$ and $-\imath E_{22}$, respectively,
and the left-invariant tensor field $\varphi$ such that $\varphi(X)=Y$, $\varphi(Y)=-X$ and $\varphi(\xi_1)=\varphi(\xi_2)=0$.
The manifold $U(2)$ is compact, connected, with Euler number $\chi (U(2))=0$, thus we can define a left-invariant Lorentz metric $g$ such that the vector fields $\xi_1$, $\xi_2$, $X$ and $Y$ form an orthonormal basis with $g(\xi_1,\xi_1)=-1$. Such a structure on $U(2)$ is constructed in the Riemannian context (\cite{DK}) and then it is adapted to the Lorentzian case (\cite{LP2}).

This structure is a normal indefinite $g.f.f$-structure and its associated Sasaki $2$-form $\Phi$ verifies $\Phi=d\eta^\alpha$, for any $\alpha\in\{1,2\}$, so that it turns out to be a Lorentz $\mathcal{S}$-structure on $U(2)$. Moreover, one sees at once that $U(2)$ has constant ${\varphi}$-sectional curvature $4$. We see that $U(2)$ does not verify the null Osserman condition with respect to $(\xi_1)_p$, for any $p\in U(2)$. In fact, fixing $p\in U(2)$ and putting
  \[
 u_1=X_p+(\xi_1)_p,\ u_2=Y_p+(\xi_1)_p,\ u_3=(\xi_2)_p+(\xi_1)_p,
  \]
  one has $u_1,u_2,u_3\in N((\xi_1)_p)$.
By (\ref{equivalente}), we have 
\begin{align*}
	{R}(Y_p,u_1)u_1&=Y_p+3g(Y_p,\varphi u_1)\varphi u_1
+\widetilde{\eta}(u_1) \widetilde{\eta}(u_1)Y_p=5Y_p,\\	
{R}((\xi_2)_p,u_1)u_1&= \sum_{\alpha=1}^2 (\xi_\alpha)_p + X_p=(\xi_2)_p+u_1.
\end{align*}
Analogously, for $u_2$, we obtain
\begin{align*}
	{R}(X_p,u_2)u_2&=X_p+3X_p +X_p=5X_p,\\	
	{R}((\xi_2)_p,u_2)u_2&=\sum_{\alpha=1}^2 (\xi_\alpha)_p +Y_p=(\xi_2)_p +u_2.
\end{align*}
For any $z\in u_3^\bot$, we have
\begin{align*}
	{R}(z,u_3)u_3&=-\widetilde{\eta}(u_3) \widetilde{\eta}(u_3){\varphi}^2z=0,\nonumber
\end{align*}
since $\widetilde{\eta}(u_3)=0$.

Then it is evident that the eigenvalues of $\bar{R}_{u_1}$ and $\bar{R}_{u_2}$ are $5$ and $1$ whereas $\bar{R}_{u_3}=0$.

\section{The $\varphi$-Null Osserman Condition}
In this section, inspired by the example of $U(2)$, we introduce a new Osserman condition that will be applied to Lorentz $g.f.f$-manifolds.

Let $(M,\varphi,\xi_\alpha,\eta^\alpha,g)$, $\alpha \in \{1,\ldots,s\}$, be a Lorentz $g.f.f$-manifold, it is easy to check that the timelike vector field must be a characteristic vector field. Without loss of generality we can assume that $\xi_1$ is the timelike vector field. 

Taking in mind the example in Section 3, we claim that, if $s\geq 2$, then the flatness of $\ker \varphi$ influences the behaviour of the Jacobi operators $\bar{R}_{u_\alpha}$ with $u_\alpha=(\xi_1)_p+(\xi_\alpha)_p$, for any $\alpha\in\{2,\ldots,s\}$ and $p\in M$. Since the matter is related to the null vector $u_\alpha$, we give the following Osserman condition.

Given a point $p$ of $M$, the set 
\[
S_\varphi ((\xi_1)_p)=S((\xi_1)_p)\cap \operatorname{Im}\varphi_p,
\]
is called the \emph{$\varphi$-celestial sphere} of $(\xi_1)_p$ at $p$. We define the analogous of the null congruence set, called the $\varphi$-null congruence set, denoted by $N_\varphi((\xi_1)_p)$, putting
\[
N_\varphi((\xi_1)_p)=\{u\in T_p M \;|\; u=(\xi_1)_p+x,\; x\in S_\varphi((\xi_1)_p)\}.
\]

Now, we are ready to state the definition of $\varphi$-null Osserman condition with respect to the timelike vector $(\xi_1)_p$ at a point $p\in M$.

\begin{definition}
Let $(M,\varphi,\xi_\alpha,\eta^\alpha,g)$ be a Lorentz $g.f.f$-manifold, $\dim M=2n+s$, $n,s\geq 1$, with timelike vector field $\xi_1$ and consider $p\in M$. $M$ is called $\varphi$-null Osserman with respect to $(\xi_1)_p$ at a point $p\in M$ if the characteristic polynomial of $\bar{R}_u$ is independent of $u\in N_\varphi((\xi_1)_p)$, that is the eigenvalues of $\bar{R}_u$ are independent of $u\in N_\varphi((\xi_1)_p)$.
\end{definition}
\begin{remark}
If $(M,\varphi,\xi,\eta,g)$ is a Lorentz almost contact manifold, then it can be considered as a Lorentz $g.f.f$-manifold with $s=1$. Obviously one has $S((\xi)_p)=S_\varphi((\xi)_p)$ and $N((\xi)_p)=N_\varphi((\xi)_p)$, for any $p\in M$. It follows that the null Osserman condition with respect to $\xi_p$ at a point $p$ coincides with the $\varphi$-null Osserman condition at the same point.
\end{remark}

It is clear that $U(2)$ verifies the $\varphi$-null Osserman condition with respect to $(\xi_1)_p$ at a point $p\in U(2)$. In fact,
we consider an arbitrary unit vector $z$ of $\operatorname{Im}\varphi_p$ putting $z=a X_p+ b Y_p$.
Setting $u_4=z+(\xi_1)_p$, we have $u_4\in N_\varphi((\xi_1)_p)$ and
\[
	u_4^\bot=span\{X_p+a(\xi_1)_p,Y_p+b(\xi_1)_p,(\xi_2)_p\}=span\{\varphi u_4, u_4,(\xi_2)_p\}.
\]
Then, we get
\begin{align*}
	{R}(\varphi u_4,u_4)u_4&={\varphi}u_4+3\varphi u_4+{\varphi}u_4=5{\varphi}u_4,\\
	{R}((\xi_2)_p,u_4)u_4&= \sum_{\alpha=1}^2 (\xi_\alpha)_p-{\varphi}^2u_4= (\xi_2)_p+(\xi_1)_p+z=(\xi_2)_p+u_4.\nonumber
\end{align*}
It follows that, for any $u=z+(\xi_1)_p$ in $N_\varphi((\xi_1)_p)$ with $z\in \operatorname{Im}\varphi_p$ and $g(z,z)=1$, the eigenvalues of $\bar{R}_u$ are $5$ and $1$, hence the eigenvalues of $\bar R_u$ are independent of the choice of $u\in N_\varphi((\xi_1)_p)$.

Taking into account the classical definitions for the Osserman manifolds, we introduce the globally $\varphi$-null Osserman condition.
\begin{definition}
Let $(M,\varphi,\xi_\alpha,\eta^\alpha,g)$ be a Lorentz $g.f.f$-manifold, $\dim M=2n+s$, $n,s\geq 1$, with timelike vector field $\xi_1$. 
If $M$ is $\varphi$-null Osserman with respect to $(\xi_1)_p$, for any $p\in M$, and the characteristic polynomial of $\bar{R}_u$ is independent of the choice of $p\in M$, then $M$ is said to be globally $\varphi$-null Osserman with respect to $\xi_1$.
\end{definition}

Looking again at the example of $U(2)$ one can see at once that it is a globally $\varphi$-null Osserman manifold with rispect to $\xi_1$. In fact, it is clear that the eigenvalues of $\bar R_u$ are independent of the point $p$. 

In the next theorem we prove, more generally, that each Lorentz $\mathcal{S}$-space form satisfies the $\varphi$-null Osserman condition.

\begin{theorem}\label{S-form}
Let $(M,\varphi,\xi_\alpha,\eta^\alpha,g)$, $\dim M=2n+s$, be a Lorentz $\mathcal{S}$-manifold with $\xi_1$ timelike and constant $\varphi$-sectional curvature, Let $p\in M$. Then $M$ verifies the $\varphi$-null Osserman condition with respect to the timelike characteristic vector at a point $p$.
\end{theorem}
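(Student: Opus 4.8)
The plan is to fix $p\in M$, take an arbitrary $u\in N_\varphi((\xi_1)_p)$ — so that $u=(\xi_1)_p+x$ with $x\in\operatorname{Im}\varphi_p$, $g(x,x)=1$ — and to compute the Jacobi operator $\bar R_u$ explicitly on a convenient basis of $\bar u^\bot$, showing that its eigenvalues depend only on $n$, $s$ and on the (constant) $\varphi$-sectional curvature $c$, hence are independent of the choice of $u$ (and, since $c$ is global, also of $p$).

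First I would describe $u^\bot$ and $\bar u^\bot$. Since the metric is Lorentz and the timelike direction $\xi_1$ lies in $\ker\varphi$, the distribution $\operatorname{Im}\varphi_p$ is positive definite; thus $\{x,\varphi x\}$ spans a non-degenerate $\varphi$-plane and $\operatorname{Im}\varphi_p=\operatorname{span}\{x,\varphi x\}\oplus\mathcal D$, with $\mathcal D$ a $\varphi$-invariant spacelike subspace of dimension $2n-2$. Using $\widetilde\eta(x)=0$ and $g(x,\varphi x)=\Phi(x,x)=0$ one gets $u^\bot=\operatorname{span}\{u,\varphi x,(\xi_2)_p,\dots,(\xi_s)_p\}\oplus\mathcal D$; since the subspace $\operatorname{span}\{\varphi x,(\xi_2)_p,\dots,(\xi_s)_p\}\oplus\mathcal D$ is positive definite, it is a genuine complement of $\operatorname{span}\{u\}$ in $u^\bot$, so that $\{\overline{\varphi x},\overline{(\xi_2)_p},\dots,\overline{(\xi_s)_p}\}$ together with a basis of $\bar{\mathcal D}$ is a $\bar g$-orthogonal basis of $\bar u^\bot$.

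The core step is the evaluation of $\bar R_u\bar v=\pi(R(v,u)u)$ on these vectors. Writing $u=\xi_1+x$ and expanding $R(v,\xi_1+x)(\xi_1+x)$ into four terms, one reduces everything to the space-form formula (\ref{equivalente}) (together with the identities (\ref{PropS}) it encodes), systematically using $\varphi^2x=-x$, $\widetilde\eta(u)=\varepsilon_1\eta^1(\xi_1)=-1$, skew-adjointness of $\varphi$, and the fact that the curvature of an indefinite $\mathcal S$-manifold vanishes as soon as one entry lies in $\ker\varphi$ and the other three in $\operatorname{Im}\varphi$. The computation should show that $\bar R_u$ leaves each of the three blocks invariant and acts by
\[
\bar R_u\overline{\varphi x}=(c+1)\,\overline{\varphi x},\qquad \bar R_u\bar d=\tfrac{c+4+3\varepsilon}{4}\,\bar d\ \ (d\in\mathcal D),\qquad \bar R_u\overline{(\xi_\alpha)_p}=\textstyle\sum_{\beta=2}^{s}\overline{(\xi_\beta)_p}\ \ (\alpha\geq2),
\]
with $\varepsilon=\sum_\alpha\varepsilon_\alpha=s-2$ (recall $\varepsilon_1=-1$, $\varepsilon_\alpha=1$ for $\alpha\geq2$). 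On the $\ker\varphi$-block the matrix of $\bar R_u$ is $\mathbf 1\mathbf 1^{\mathsf T}$, whose eigenvalues are $s-1$ and $0$. Hence the eigenvalues of $\bar R_u$ are $c+1$, $\tfrac{c+4+3\varepsilon}{4}$, $s-1$, and (when $s\geq3$) $0$, with multiplicities $1$, $2n-2$, $1$, $s-2$ respectively; none of them depends on $u$, which is precisely the $\varphi$-null Osserman condition at $p$. The same formulas, being point-independent, also show that $M$ is globally $\varphi$-null Osserman with respect to $\xi_1$. (For $s=1$ this recovers the known Lorentz Sasaki space form statement.)

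I expect the main obstacle to be the bookkeeping in the core step: one must check that all mixed curvature components between the three blocks vanish (so $\bar R_u$ is genuinely block-diagonal), and carry out carefully the evaluation of $R(\varphi x,x)x$, where the two terms of (\ref{equivalente}) weighted by $\tfrac{c+3\varepsilon}{4}$ and $\tfrac{c-\varepsilon}{4}$ must combine to give exactly $c\,\varphi x$. A secondary point requiring care is that $u$ is lightlike, so the decomposition of $u^\bot$ has to be passed to the quotient $\bar u^\bot$, and one must verify that $R(v,u)u\in u^\bot$ and that the induced endomorphism $\bar R_u$ is well defined.
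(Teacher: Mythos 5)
Your proposal is correct and follows essentially the same route as the paper: fix $u=(\xi_1)_p+x\in N_\varphi((\xi_1)_p)$, use the $\mathcal{S}$-space-form curvature formula (\ref{equivalente}) to compute $\bar R_u$ on the adapted basis $\{\overline{\varphi x},(\bar\xi_2)_p,\dots,(\bar\xi_s)_p\}$ plus an orthonormal basis of the remaining spacelike part of $\operatorname{Im}\varphi_p$, obtaining exactly the paper's values $c+1$, $\tfrac{c+3s-2}{4}=\tfrac{c+4+3\varepsilon}{4}$, and the all-ones block on the $\xi$-directions with eigenvalues $s-1$ and $0$. Since these are independent of $u$ (and of $p$), the $\varphi$-null Osserman condition follows just as in the paper's proof.
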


\begin{proof}
Let $p\in M$. Denoting by $c$ the $\varphi$-sectional curvature, (\ref{equivalente}) holds with $\varepsilon=s-2$. 

Let $u$ be a vector in $N_\varphi((\xi_1)_p)$, hence $u=(\xi_1)_p+x_1$ with $x_1\in S_\varphi ((\xi_1)_p)$, and consider $x\in u^\bot$. We have:
\begin{align}
	g(\varphi u,\varphi u)&= g(u,u)-\sum_{\alpha=1}^s \varepsilon_\alpha \eta^\alpha (u)\eta^\alpha (u)=\eta^1(u)\eta^1 (u)=1,\label{phiu}\\
	g(\varphi x,\varphi u)&= g(x,u)-\sum_{\alpha=1}^s \varepsilon_\alpha \eta^\alpha (x)\eta^\alpha (u)=\eta^1 (x).\label{phix} 
\end{align}
By (\ref{equivalente}), (\ref{phiu}) and (\ref{phix}) we compute $R(x,u,u,w)$ for any $w\in T_p M$, obtaining
\begin{align}
	R_p(x,u,u,w)&
=-\frac{c+3(s-2)}{4}\{g(\varphi x,\varphi w) -\eta^1(x)g(\varphi u,\varphi w)\} \label{curvlightlike}\\
&  \quad -\frac{3}{4}(c-s+2)g(x,\varphi u)
g(w,\varphi u) \nonumber\\
&\quad -\{\widetilde{\eta}(w) \widetilde{\eta}(x) +\widetilde{\eta}(w) \eta^1(x) + g(\varphi w,\varphi x) 
+  \widetilde{\eta}(x)g(\varphi w,\varphi u)\} .\nonumber
\end{align}

Now, being $\dim M=2n+s$, we consider $\{x_1, \varphi x_1, x_3, \ldots , x_{2n}\}$ as an orthonormal base of $\operatorname{Im} \varphi_p$, which determines the bases $\mathfrak{B}=\{u,\varphi x_1, (\xi_2)_p,\ldots,(\xi_s)_p, x_3, \ldots , x_{2n}\}$ of $u^\bot$ and $\overline{\mathfrak{B}}=\{\overline{\varphi x}_1,(\bar{\xi}_2)_p,\ldots,(\bar{\xi}_s)_p,\bar{x}_3, \ldots , \bar{x}_{2n}\}$ of $\bar{u}^\bot$. For brevity, we also denote them by $\mathfrak{B}=\{e_i\}_{1\leq i\leq m}$, $\bar{\mathfrak{B}}=\{\bar{e}_i\}_{1\leq i\leq m-1}$, being $m=2n+s-1$. In general, for any $x\in u^\bot$, one has
\begin{equation}\label{espres}
\bar{R}_u(\bar{x})=-\sum_{i=1}^{m-1}R_p(x,u,u,e_i)\bar{e}_i.
\end{equation}
By (\ref{curvlightlike}) and (\ref{espres}) we obtain
\begin{align*}
\bar{R}_u (\overline{\varphi x}_1)&=\{\frac{c+3(s-2)}{4}+\frac{3}{4}(c-s+2)\}\overline{\varphi x}_1 +\overline{\varphi x}_1=(c+1)\overline{\varphi x}_1,\\
\bar{R}_u (\bar{x}_j)&=\frac{c+3(s-2)}{4}\bar{x}_j+\bar{x}_j=\frac{c+3s-2}{4}\bar{x}_j,\quad \forall j \in \{2,\ldots,2n\} \\	
\bar{R}_u ((\bar{\xi}_\beta)_p)&= \sum_{\gamma=2}^s\widetilde{\eta}( (\xi_\beta)_p)\widetilde{\eta}((\xi_\gamma)_p)(\bar{\xi}_\gamma)_p=\sum_{\gamma=2}^s(\bar{\xi}_\gamma)_p, \quad \forall\beta\in\{2,\ldots,s\}.
\end{align*}
It follows that the representation matrix of $\bar{R}_u$ with respect to $\bar{\mathfrak{B}}$ is independent of the choice of $u\in N_\varphi((\xi_1)_p)$. In particular, it is easy to compute that the other eigenvalues are $0$ and $s-1$, having eigenvectors $\bar{x}_\alpha=(\bar{\xi}_2)_p-(\bar{\xi}_\alpha)_p$, $\alpha\in\{3,\ldots,s\}$, and $\bar{x}=\sum_{\beta=2}^s(\bar{\xi}_\beta)_p$, respectively. This completes the proof.
\end{proof}

By the above proof we note that, as for $U(2)$, each Lorentz $\mathcal{S}$-manifold $(M,\varphi,\xi_\alpha,\eta^\alpha,g)$, $\dim M=2n+s$, with constant $\varphi$-sectional curvature is globally $\varphi$-null Osserman with respect to $\xi_1$.

From now on, since the Osserman conditions are formulated pointwise, to simplify the notation we omit any reference to the point, there is no ambiguity.

\section{The $\varphi$-null Osserman condition on Lorentz $\mathcal{S}$-manifolds with additional assumptions}
In this section we proceed with the study of $\varphi$-null Osserman manifolds and we will find an expression for the curvature tensor field of a $\varphi$-null Osserman Lorentz $\mathcal{S}$-manifold with two characteristic vector fields, using a suitable expression for null vectors. An analogous statement can be found in different contexts (\cite{GKV}). In the first part of this section we collect the technical issues needed for the main result, which will be provided in the second subsection.

\subsection{Technical results}
In \cite{GSV} the authors have given the esplicit construction of a complex structure on a $(4m+2)$-dimensional globally Osserman manifold with exactly two distinct eigenvalues of the Jacobi operators with multiplicities $1$ and $4m$ (see also \cite{GKV}). We will use such a construction, adapting it when the manifold verifies the $\varphi$-null Osserman condition at a point.

Following \cite{GKV1,GKV}, we recall that if $(M,g)$ is a Lorentz manifold and $u$ is a null vector of $T_p M$ then a non-degenerate subspace $W\subset u^\bot$ such that $\dim W=\dim \bar{u}^\bot$ is called a \emph{geometric realization} of $\bar{u}^\bot$. Let $\pi|_{W}:(W,g)\rightarrow (\bar{u}^\bot,g)$ be an isometry where, to simplify, we use the same letter $g$ for non-degenerate metrics on $W$ and $\bar{u}^\bot$. A vector $x\in W$ is said to be a \emph{geometrically realized eigenvector} of $\bar{R}_u$ in $W$ corresponding to an eigenvalue $\lambda$ if $\pi|_W(x)=\bar{x}$ is an eigenvector of $\bar{R}_u$ with eigenvalue $\lambda$ (\cite{GKV}).

\begin{remark}\label{GSV}
Let $(M,\varphi,\xi_\alpha,\eta^\alpha,g)$ be a $(2n+s)$-dimensional $\varphi$-null Osserman Lorentz $\mathcal{S}$-manifold at a point $p\in M$ and $u\in N_\varphi(\xi_1)$. We suppose that the Jacobi operator $\bar{R}_u$, restricted to $u^\bot\cap\operatorname{Im}\varphi$, has exactly two eigenvalues, $c_1$ and $c_2$, with multiplicities $1$ and $2n-2$. 

Since $u=\xi_1+x$, $x\in S_\varphi(\xi_1)$, using (\ref{PropS}), it is easy to see that the eigenvalues and the eigenvectors of the Jacobi operator $\bar{R}_u$ are connected with those of $R_x|_{x^\bot\cap\operatorname{Im}\varphi}$. Namely, one can prove that $v\in x^\bot\cap\mathrm{Im}\varphi$ is an eigenvector of $R_x$ related to the eigenvalue $\lambda$ if and only if it is a geometrically realized eigenvector of $\bar{R}_u$ related to the eigenvalue $\lambda+1$ (\cite{BC}).

Now, let us fix $p\in M$ and, following \cite{GSV}, identify $S_\varphi(\xi_1)\cong S^{2n-1}$. For any $x\in S^{2n-1}$ consider the operator $R_x:x^\bot\cap \operatorname{Im}\varphi \rightarrow x^\bot\cap \operatorname{Im}\varphi$ and  the line bundle over the sphere $S^{2n-1}$, defined by the eigenspace corresponding to the eigenvalue $c_1-1$ of $R_x$. Since any line bundle over a sphere is trivial, we have a map $J:S_\varphi(\xi_1)\rightarrow S_\varphi(\xi_1)$ such that $Jx=v_x$ for any $x\in S_\varphi(\xi_1)$, where $v$ is a global unit section of the line bundle. To simplify the writing, we put $\lambda=c_1-1$ and $\mu=c_2-1$.  Then, with the following sequence of claims, we proceed along the same lines as the authors made in \cite{GSV}, which the reader is referred to for details.

\medskip
\noindent{\bf Claim (a). }The map $J$ satisfies $J^2(x)=-x$ and $J(-x)=-J(x)$ for any $x\in S_\varphi(\xi_1)$.
\medskip\\
Considered the $2$-plane $V_x=span\{x,Jx\}$, if $w$ is a unit vector in $V_x$ then there exists $\theta\in [0,2\pi[$ such that $w=\cos(\theta)x+\sin(\theta)Jx$. Defining $z(w)=-\sin(\theta)x+\cos(\theta)Jx$, one proves that $z(w)$ is eigenvector of $R_w$ corresponding to $\lambda$, then $z(w)=\pm Jw$. Using this last formula, it follows $J^2(x)=-x$ and $J(-x)=-J(x)$ for any $x\in S_\varphi(\xi_1)$.

\medskip
\noindent{\bf Claim (b). }$J:\operatorname{Im}\varphi\rightarrow \operatorname{Im}\varphi$ is linear.
\medskip\\
The map $J$ is extended to $\operatorname{Im}\varphi$ putting $J(ax)=aJ(x)$, where $a\in\mathbb{R}$. Assuming that $J(\cos(\theta)x+\sin(\theta)y)=\cos(\theta)Jx+\sin(\theta)Jy$ for all angles $\theta$ and any $x$, $y$ unit vectors such that $y\bot V_x$, we obtain $J(x'+y')=J(x')+J(y')$, for any $x'$ and $y'$ such that $y'\bot V_x'$, which implies the claim.

\medskip
\noindent{\bf Claim (c). }$J(\cos(\theta)x+\sin(\theta)y)=\cos(\theta)Jx+\sin(\theta)Jy$ for all angles $\theta$ and any $x$, $y$ unit vectors such that $y\bot V_x$.
\medskip\\
Let us define $J'=\pm J$ and consider $A_{\theta}= \cos(\theta)x+\sin(\theta)y$, $B_\theta=\cos(\theta)Jx+\sin(\theta)J'y$. Assuming that $B_\theta$ is an eigenvector of $R_{A_\theta}$, then one has $B_\theta=\pm J A_\theta$, for any angle $\theta$. For $\theta=0$ one has $B_\theta= J A_\theta$ then the plus sign occurs. For $\theta=\frac{\pi}{2}$ it follows $J'y=B_\theta=JA_\theta=Jy$, i.e. $J'=J$, that implies the claim.

\medskip
\noindent{\bf Claim (d). }$R_{A_\theta}(B_\theta)=\lambda B_\theta$.
\medskip\\
Note that the claim is equivalent to proving that $R(B_\theta,A_\theta,A_\theta,B_\theta)=-\lambda$. Expanding this last formula in term of $x$, $Jx$, $y$ and $J'y$ one finds $R(Jx,x,y,J'y)+R(J'y,x,y,Jx)=\mu-\lambda$. After the following two technical lemmas one obtains the claim.

\begin{lemma}[\cite{GSV}]
\begin{itemize}
	\item [(1)] $R(z,v)w=-R(z,w)v$ when $v$, $w$ and $z$ are unit vectors such that $v\bot w$ and $w,v\bot V_z$.
	\item [(2)] $R(z,v)w=0$ when $v$, $w$ and $z$ are unit vectors such that $z\bot V_v$ and $z,v\bot V_w$.
	\item [(3)] $2R(x,y,J'y,Jx)=R(Jx,x,y,J'y)$.
	\item [(4)] $2R(J'y,x,y,Jx)=R(Jx,x,y,J'y)$.
\end{itemize}
\end{lemma}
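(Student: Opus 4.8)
The statement is (the Lorentzian, $\operatorname{Im}\varphi$-version of) a technical lemma of \cite{GSV}, so the plan is to reproduce that argument while checking that the $\mathcal{S}$-structure never interferes. First I would make the reduction to an algebraic problem on $W:=\operatorname{Im}\varphi_p$ explicit. Using the identities (\ref{PropS}) one checks that, for a unit $x\in W$, the vector $R(v,x)x$ again lies in $x^\bot\cap W$, so $R_x$ is a well-defined self-adjoint endomorphism of $x^\bot\cap W$; by Remark \ref{GSV} the $\varphi$-null Osserman hypothesis forces $R_x$ to have exactly the two eigenvalues $\lambda=c_1-1$, on the line $\mathbb{R}Jx$, and $\mu=c_2-1$, on $V_x^\bot\cap W$, for \emph{every} unit $x\in W$. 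Equivalently, $R(v,x)x=\mu\bigl(v-g(v,x)x\bigr)+(\lambda-\mu)\,g(v,Jx)\,Jx$ for all $v\in W$. This is the only place the $\varphi$-null Osserman condition enters; from here everything is curvature algebra for an algebraic curvature tensor with two-eigenvalue Jacobi operator, exactly as in \cite{GSV}.

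The heart of the matter is a polarization argument. Fixing a unit $z$ and letting $z'=\cos\theta\,z+\sin\theta\,Jz$ run over the unit circle of $V_z$, Claim (a) gives $V_{z'}=V_z$, so for every unit $t\in W$ with $t\bot V_z$ the relation $R(t,z')z'=\mu t$ holds for all $\theta$; expanding in $\theta$ and reading off the $\sin\theta\cos\theta$-term yields $R(t,z)Jz+R(t,Jz)z=0$. Feeding this into the first Bianchi identity, together with a second polarization over the two-plane containing the relevant "direction" vectors, makes the remaining cross terms drop out and produces the antisymmetry (1) and the vanishing (2); the orthogonality hypotheses $v\bot w$, $v,w\bot V_z$ in (1), respectively $z\bot V_v$, $z,v\bot V_w$ in (2), are precisely what guarantees that every vector occurring lies in the $\mu$-eigenspace of whichever Jacobi operator is being polarized. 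Given (1) and (2), identities (3) and (4) follow by expanding $R(x,y,J'y,Jx)$ and $R(J'y,x,y,Jx)$ via the first Bianchi identity and substituting $R(v,x)x=\mu v$ for $v\bot V_x$ together with $R(Jx,x)x=\lambda Jx$; the undetermined sign $J'=\pm J$ is simply carried through and is pinned down later, in Claims (c) and (d).

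The step I expect to be the real obstacle is this bookkeeping. Because the two eigenspaces of the Jacobi operator live inside $\operatorname{Im}\varphi_p$ rather than inside all of $T_pM$, every polarization has to be accompanied by an appeal to (\ref{PropS}) to discard the $\ker\varphi$-components of $R(v,x)x$ and of the mixed curvature terms, so the adaptation of the $\mathcal{S}$-free argument of \cite{GSV} is not entirely cosmetic. Equally delicate is avoiding circularity: $J$ is not yet known to be linear — its linearity is obtained afterwards, in Claims (b)--(c), and in fact rests on the present lemma through Claim (d) — so one may not shortcut any of the computations by polarizing in the base vector $x$. Keeping the chain $(1),(2)\Rightarrow(3),(4)\Rightarrow$ Claim (d) $\Rightarrow$ Claim (c) $\Rightarrow$ Claim (b) free of such loops is the point that needs care; the rest is the routine curvature algebra already carried out in \cite{GSV}.
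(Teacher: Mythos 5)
Your setup is fine and matches what the paper actually relies on: the paper itself gives no proof of this lemma --- it is quoted from \cite{GSV} inside Remark \ref{GSV}, which refers the reader there for details, the only adaptation being the restriction to $W=\operatorname{Im}\varphi_p$ via the eigenvalue correspondence of \cite{BC} and the identities (\ref{PropS}) (which indeed show $R_x$ preserves $x^\bot\cap W$, as you note). So your proposal has to stand as a reconstruction of the \cite{GSV} argument, and there it has a genuine gap: the only polarization you actually justify is the one over the circle in $V_z$ (legitimate by Claim (a)), which yields $R(t,z)Jz+R(t,Jz)z=0$ for $t\bot V_z$; but this is not item (1), and the step ``feeding this into the first Bianchi identity, together with a second polarization over the two-plane containing the relevant direction vectors'' is asserted, not proved. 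The polarization one really needs for (1) is in the \emph{base} of the Jacobi operator, over $v_\theta=\cos\theta\,v+\sin\theta\,w$, applied to $z$; for that one must know that $z$ stays in the $\mu$-eigenspace of $R_{v_\theta}$, i.e.\ $z\bot Jv_\theta$, and this does not follow from the hypotheses of (1) as you claim (``every vector occurring lies in the $\mu$-eigenspace of whichever Jacobi operator is being polarized'' is precisely the point that needs an argument, since $J$ is not yet known to be linear or skew-adjoint).

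The missing ingredient is the reciprocity property: for unit $a\bot b$, $a\bot V_b$ if and only if $b\bot V_a$. It follows from the symmetry of the Jacobi quadratic form, $g(R(a,b)b,a)=g(R(b,a)a,b)$, together with $\lambda\neq\mu$: writing $b=\beta Ja+b'$ with $b'\bot V_a$ gives $g(R_a b,b)=\mu+\beta^2(\lambda-\mu)$, while $a\bot V_b$ gives $g(R_b a,a)=\mu$, hence $\beta=0$. With reciprocity in hand, the hypotheses of (1) give $v_\theta\bot V_z$, hence $z\bot V_{v_\theta}$, hence $R(z,v_\theta)v_\theta=\mu z$ for all $\theta$; the $\sin\theta\cos\theta$-term is exactly $R(z,v)w+R(z,w)v=0$, which is (1), with no Bianchi identity and no use of the $V_z$-circle identity at all. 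Item (2) then follows from the first Bianchi identity and three applications of (1) (with base $z$, $v$ and $w$, all legitimate by reciprocity), and (3)--(4) are, as you say, algebra from (1), (2) and the eigenvalue relations. Note also that this base-point polarization uses nothing about linearity of $J$, so the circularity you are guarding against does not arise; by forbidding yourself polarization in the base vector you block the very step the proof needs, while the route you offer instead (the $V_z$-identity plus Bianchi) is not shown to produce (1) or (2).
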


\begin{lemma}[\cite{GSV}]
The curvature tensor satisfies $R(Jx,x,y,J'y)=\pm\frac{2(\mu-\lambda)}{3}$.
\end{lemma}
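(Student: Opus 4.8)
The plan is to pin down $R(Jx,x,y,J'y)$ by feeding the two–eigenvalue condition into the algebraic identities already available; write $T:=R(Jx,x,y,J'y)$ and $S:=R(Jx,y,x,J'y)$ for brevity. Since $J$ is an isometry of $\operatorname{Im}\varphi$ with $J^{2}=-\mathrm{id}$ and $y\perp V_{x}$, the vectors $x,Jx,y,J'y$ form an orthonormal system; moreover the curvature symmetries (pair symmetry and antisymmetry) give $R(J'y,x,y,Jx)=S$, so part (4) of the preceding Lemma reads $S=\frac{1}{2}T$.

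The first, and main, step is a polarization of the eigenvalue condition along the $2$–plane $\operatorname{span}\{x,y\}\subset\operatorname{Im}\varphi$. For $\theta\in[0,2\pi[$ let $w=\cos\theta\,x+\sin\theta\,y\in S_{\varphi}(\xi_{1})$; its $\lambda$–eigenvector under $R_{w}$ is $Jw=\cos\theta\,Jx+\sin\theta\,J'y$, so the condition $R_{w}(Jw)=\lambda Jw$ is the scalar identity $R(Jw,w,w,Jw)=-\lambda$. I would expand this multilinearly in $x,Jx,y,J'y$ (set $c=\cos\theta$, $s=\sin\theta$): the $c^{4}$– and $s^{4}$–coefficients are $R(Jx,x,x,Jx)=R(J'y,y,y,J'y)=-\lambda$; each monomial in the $c^{3}s$– and $cs^{3}$–coefficients reduces, via the curvature symmetries, to $\pm g(R_{v}e,e')$ with $e$ an eigenvector of $R_{v}$ and $e'\perp e$ (here $v,e,e'$ are among $x,Jx,y,J'y$), hence vanishes; and the $c^{2}s^{2}$–coefficient is a sum of six terms which, using the eigenvalue conditions for $R_{x},R_{y},R_{Jx},R_{J'y}$, collapses to $T+2S-2\mu+R(J'y,y,x,Jx)$. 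Since $c^{4}+s^{4}=1-2c^{2}s^{2}$, comparison forces that coefficient to equal $-2\lambda$, i.e.
\[
T+2S+R(J'y,y,x,Jx)=2(\mu-\lambda).
\]

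It then remains to eliminate $R(J'y,y,x,Jx)$. Applying the first Bianchi identity to $(J'y,y,x,Jx)$ and using part (3) of the preceding Lemma in the form $R(x,y,J'y,Jx)=\frac{1}{2}T$ gives $R(J'y,y,x,Jx)=S+\frac{1}{2}T=T$. Substituting this together with $S=\frac{1}{2}T$ into the displayed relation yields $3T=2(\mu-\lambda)$, that is $R(Jx,x,y,J'y)=\frac{2(\mu-\lambda)}{3}$; the $\pm$ in the statement merely records the sign ambiguity in the global unit section that defines $J$ (hence $J'$), under which $T\mapsto -T$.

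I expect the first step to be the crux: one has to organise the sixteen monomials carefully, apply the antisymmetries and the pair symmetry with the correct signs so that the "cubic" terms genuinely drop out and the six "quadratic" ones collapse as stated. A further point requiring care is that the identification $Jw=\cos\theta\,Jx+\sin\theta\,J'y$ is essentially the linearity of $J$, which is itself part of the same bootstrap (Claims (a)--(d)); since the present Lemma underlies Claim (d), one must either arrange the computation to use only the properties of $J$ already in hand or replace the polarization by a differentiation of the $\lambda$–eigenvector along the curve $\theta\mapsto w$, so as to avoid circularity.
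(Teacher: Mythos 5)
Your algebra is essentially sound (the cubic terms do vanish, the $\cos^2\theta\sin^2\theta$--coefficient is $2T+2S-2\mu$, and the Bianchi step giving $R(J'y,y,x,Jx)=S+\tfrac12T$ is correct), but the argument is circular, and the circularity you flag at the end is not a side issue — it is the whole difficulty. Note first that the paper does not prove this lemma at all: it is quoted from \cite{GSV} inside the sketch of Remark \ref{GSV}. In that scheme the logical order is: technical lemmas $\Rightarrow$ Claim (d) $\Rightarrow$ Claim (c) $\Rightarrow$ Claim (b). Your ``main step'' assumes that $B_\theta=\cos\theta\,Jx+\sin\theta\,J'y$ is a $\lambda$-eigenvector of $R_{A_\theta}$, i.e.\ $R(B_\theta,A_\theta,A_\theta,B_\theta)=-\lambda$ for all $\theta$; but that is exactly Claim (d) (together with Claim (c)), which is deduced \emph{from} the present lemma. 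Indeed, expanding $R(B_\theta,A_\theta,A_\theta,B_\theta)=-\lambda$ gives $R(Jx,x,y,J'y)+R(J'y,x,y,Jx)=\mu-\lambda$ (this is precisely the reduction stated in the Remark), which together with part (4) yields Claim (d) \emph{once} $R(Jx,x,y,J'y)=\pm\tfrac{2(\mu-\lambda)}{3}$ is known. Your computation is this reduction run backwards: it proves ``Claim (d) $\Rightarrow$ Lemma'', not the lemma. At the stage where the lemma is needed, the only thing available is that the section value $J(A_\theta)$ is a $\lambda$-eigenvector; the identity $J(A_\theta)=B_\theta$ is the linearity that the whole bootstrap is trying to establish. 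Neither of your suggested repairs is carried out, and the second (differentiating the eigenvector along $\theta\mapsto A_\theta$) does not obviously help, because that eigenvector is precisely the unknown object.

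There is a second, smaller, instance of the same problem: you justify orthonormality of $x,Jx,y,J'y$ by ``$J$ is an isometry'', which is Claim (b), again downstream of this lemma. This point is repairable from the two-eigenvalue hypothesis alone (e.g.\ $g(R_xJ'y,J'y)=R(J'y,x,J'y,x)=R(x,J'y,x,J'y)=g(R_{J'y}x,x)$ together with Claim (a) forces $J'y\perp x$ and $J'y\perp Jx$ when $y\perp V_x$), and you do need it to get $R(Jx,y,y,Jx)=R(J'y,x,x,J'y)=-\mu$ in your quartic coefficient. What a non-circular proof must do — and what \cite{GSV} actually does — is extract the value of $T=R(Jx,x,y,J'y)$ from the spectral assumption on the Jacobi operators themselves: without Claim (d) one only knows that $B_\theta$ is a unit vector orthogonal to $A_\theta$, so the identity $g(R_{A_\theta}B_\theta,B_\theta)=\lambda+\cos^2\theta\sin^2\theta\,\bigl(2(\mu-\lambda)-3T\bigr)$, combined with the fact that this Rayleigh quotient lies between $\lambda$ and $\mu$, yields only the inequality $|T|\le\tfrac{2|\mu-\lambda|}{3}$. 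Upgrading this to the equality $T=\pm\tfrac{2(\mu-\lambda)}{3}$ requires further use of the fact that the spectrum consists of exactly the two values $\lambda$ and $\mu$ (this is where parts (1)--(3) of the preceding lemma are exploited in \cite{GSV}); that step is the genuine content of the lemma and is missing from your proposal.
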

\end{remark} 

Now we give some remarks about a null vector of a Lorentz $\mathcal{S}$-manifold with two characteristic vector fields and then we prove a lemma.
  
\begin{remark}
Let $(M,\varphi,\xi_\alpha,\eta^\alpha,g)$, $\alpha\in\{1,2\}$, be a Lorentz $\mathcal{S}$-manifold with timelike vector field $\xi_1$ and $u$ a null vector in $T_p M$, $p\in M$. Since $TM=\operatorname{Im }\varphi\oplus \ker \varphi$, one can write $u$ in the following way
\[
u=\lambda x + a \,\xi_1 + b\, \xi_2,
\]
where $x\in\operatorname{Im}\varphi$ with $g(x,x)=1$. Being $u$ a null vector, we have $\lambda^2+ b^2 =a^2$ therefore there exists $\theta\in [0,2\pi[$ such that $u$ can be written as follows
\[
u=a(\cos\theta\: x+\xi_1+\sin \theta \:\xi_2),
\]
and it is not a restriction to use
\begin{equation}\label{ort}
u=\cos\theta \: x+\xi_1+\sin \theta \: \xi_2.
\end{equation}
For $\cos\theta\neq 0$ consider the vector $w=\tan \theta\: \xi_1 +\frac{1}{\cos\theta}\xi_2$. It is easy to check that $w$ is a unit vector orthogonal to $u$, therefore
\[
u^\bot=span\{u,\varphi x, x_2, \varphi x_2,\ldots x_n, \varphi x_n,w\}.
\]
Any $y\in u^\bot$ can be written as
\begin{equation}\label{ort01}
y=\rho u+\nu y'+\kappa w,
\end{equation}
where $y'\in span\{\varphi x, x_2, \varphi x_2,\ldots x_n, \varphi x_n\}\subset \operatorname{Im}\varphi_p\cap u^\bot$ and $\rho,\kappa,\nu\in\mathbb{R}$.
\end{remark}

We need to define two $(1,3)$-type tensors $S^*$ and $S_*$ putting
\begin{align*}
S^*(x,y)v&=\widetilde{\eta}(y)\widetilde{\eta}(v)x-\widetilde{\eta}(x)\widetilde{\eta}(v)y+g(y,v)\widetilde{\eta}(x)\widetilde{\xi}-g(x,v)\widetilde{\eta}(y)\widetilde{\xi},\\
S_*(x,y)v&=- g(\varphi y,\varphi v)\varphi^2 x+ g(\varphi x, \varphi v)\varphi^2 y.
\end{align*}
\begin{remark}\label{R1}
If $u\in N_\varphi(\xi_1)$ and $y\in \operatorname{Im}\varphi\cap u^\bot$, then $$g(S^*(u,y)u,y)-g(S_*(u,y)u,y)=0.$$
\end{remark}
The following lemma allows to state the expression of a curvature-like map $F$ when $F$ vanishes on a particular type of degenerate $2$-plane and it has a suitable behaviour with respect to the characteristic vector fields. 

\begin{lemma}\label{alcuni2pianidegeneri}
Let $(M,\varphi,\xi_\alpha,\eta^\alpha,g)$, $\alpha\in\{1,2\}$, be a Lorentz $g.f.f$-manifold with timelike vector field $\xi_1$. Fixed a point $p\in M$, let $F:(T_p M)^4\rightarrow \mathbb{R}$ be a curvature-like map such that, for any $x,y,v\in \operatorname{Im}\varphi$ and any $\alpha,\beta,\gamma\in\{1,2\}$, 
\begin{equation}\label{condF}
F(x,\xi_\alpha,y,v)=0,\	F(\xi_\alpha,x,\xi_\beta,y)=\varepsilon_\alpha\varepsilon_\beta g(x,y),\
	F(\xi_\alpha,x,\xi_\beta,\xi_\gamma)=0,\
	F(\xi_1,\xi_2,\xi_1,\xi_2)=0.
\end{equation}
Then the following statements are equivalent.
\begin{itemize}
	\item [a)] $F$ vanishes on any degenerate $2$-plane $\pi=span\{u,y\}$, with $u\in N_\varphi (\xi_1)$ and $y\in u^\bot \cap \operatorname{Im}\varphi$, 
	\item [b)] $F(x,y,v,z)=g(S_*(x,y)v,z)-g(S^*(x,y)v,z)$.
\end{itemize}
\end{lemma}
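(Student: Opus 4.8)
The implication (b) $\Rightarrow$ (a) is the routine direction: assuming the explicit formula for $F$, I would restrict to a degenerate plane $\pi = \mathrm{span}\{u,y\}$ with $u \in N_\varphi(\xi_1)$ and $y \in u^\bot \cap \operatorname{Im}\varphi$, and compute $F(u,y,y,u) = g(S_*(u,y)y,u) - g(S^*(u,y)y,u)$. By Remark \ref{R1} this difference vanishes, so $F$ vanishes on $\pi$. (Strictly, Remark \ref{R1} is stated with the arguments $(u,y)u,y$; I would note that the curvature-like symmetries of the bilinear-in-pairs type let me rewrite $F(u,y,y,u)$ in the needed form, or simply invoke the symmetric version of Remark \ref{R1}.) One should also check that $S_* - S^*$, built from $g$, $\varphi$, $\widetilde\eta$, $\widetilde\xi$, is genuinely a curvature-like map and satisfies the normalization conditions (\ref{condF}); this is a direct verification using $\varphi^2 = -I + \eta^\alpha\otimes\xi_\alpha$, $\widetilde\eta(\xi_\alpha) = \varepsilon_\alpha$, and the definitions, and it guarantees that $F - (S_* - S^*)$ is again curvature-like with all the conditions (\ref{condF}) replaced by zero.

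The substantive direction is (a) $\Rightarrow$ (b). The strategy is to set $G := F - (S_* - S^*)$, which is curvature-like, vanishes on every degenerate plane $\mathrm{span}\{u,y\}$ of the prescribed type (using Remark \ref{R1} to see $S_* - S^*$ also vanishes there, hence so does $G$), and satisfies $G(x,\xi_\alpha,y,v) = 0$, $G(\xi_\alpha,x,\xi_\beta,y) = 0$, $G(\xi_\alpha,x,\xi_\beta,\xi_\gamma) = 0$, $G(\xi_1,\xi_2,\xi_1,\xi_2) = 0$ for $x,y,v \in \operatorname{Im}\varphi$. The goal is then to prove $G \equiv 0$. Using the decomposition $T_pM = \operatorname{Im}\varphi \oplus \ker\varphi$ and multilinearity, it suffices to evaluate $G$ on basis vectors drawn from $\{\xi_1,\xi_2\}$ and from an orthonormal frame of $\operatorname{Im}\varphi$. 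The conditions on $G$ already kill every component in which a $\xi_\alpha$ appears (each such component reduces, via the pair symmetry $G(z,w,x,y)=G(x,y,z,w)$, the antisymmetries, and the first Bianchi identity, to one of the vanishing expressions above — this bookkeeping is the first real step). So everything comes down to showing $G$ vanishes on $\operatorname{Im}\varphi$, i.e. $G(x,y,z,w) = 0$ for all $x,y,z,w \in \operatorname{Im}\varphi$.

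For that, the plan is to apply Lemma \ref{2pianidegeneri} to the restriction of $G$ to $\operatorname{Im}\varphi_p$: if I can show $G(x,y,y,x) = 0$ for every degenerate plane inside $\operatorname{Im}\varphi_p$, then $G|_{\operatorname{Im}\varphi_p}$ has the form $k(g(x,z)g(y,w) - g(y,z)g(x,w))$, and evaluating on a $\varphi$-plane $\mathrm{span}\{x,\varphi x\}$ forces $k = 0$ once I know $G$ vanishes on such planes — which it does, since $\mathrm{span}\{\xi_1 + x,\ \varphi x\}$ is a degenerate plane of the allowed type and expanding $G(\xi_1+x,\varphi x,\varphi x,\xi_1+x) = 0$ together with the vanishing of all $\xi$-components yields $G(x,\varphi x,\varphi x,x) = 0$. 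Wait — $\operatorname{Im}\varphi_p$ with the induced metric is positive definite (Lorentzian degeneracy lives on $\ker\varphi$), so it contains no degenerate planes, and Lemma \ref{2pianidegeneri} does not apply internally; instead the right move is: for arbitrary non-null $x \in \operatorname{Im}\varphi$ and $y \in x^\bot \cap \operatorname{Im}\varphi$, the vector $u = \xi_1 + x$ lies in $N_\varphi(\xi_1)$ and $y \in u^\bot \cap \operatorname{Im}\varphi$, so hypothesis (a) gives $G(u,y,y,u) = 0$; expanding in $\xi_1, x, y$ and discarding the (already-zero) terms containing $\xi_1$ leaves precisely $G(x,y,y,x) = 0$. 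Since $x,y$ range over all orthogonal pairs in $\operatorname{Im}\varphi_p$, and the $\varphi$-plane case is handled as above, a short polarization argument (replacing $x$ by $x + tz$, etc., as in the proof that sectional-curvature-zero implies curvature-zero) upgrades this to $G(x,y,z,w) = 0$ for all $x,y,z,w \in \operatorname{Im}\varphi_p$. Combined with the vanishing of all mixed components, $G \equiv 0$, i.e. $F = S_* - S^*$, which is statement (b).

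The main obstacle I anticipate is the bookkeeping in the second paragraph: systematically showing that every component of $G$ involving at least one $\xi_\alpha$ is forced to zero by (\ref{condF}) (with zeros on the right) together with the curvature-like symmetries — in particular handling components like $G(\xi_1, x, y, z)$ or $G(x, y, \xi_1, \xi_2)$ that are not literally listed, by reducing them via Bianchi and the pair/antisymmetry relations to the listed ones. The polarization step in the last paragraph is standard but must be done carefully to stay within $\operatorname{Im}\varphi_p$ (which is closed under the relevant operations since $G$ already annihilates $\ker\varphi$-directions).
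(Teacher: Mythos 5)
Your proposal is correct, but in the substantive direction $a)\Rightarrow b)$ it follows a genuinely different route from the paper. The paper also forms the difference tensor ($H=F-g(S_*\cdot,\cdot)+g(S^*\cdot,\cdot)$, your $G$), but then it does \emph{not} decompose by components: it shows that $H$ vanishes on \emph{every} degenerate $2$-plane of the full Lorentz space $T_pM$, by writing an arbitrary null vector as $u=\cos\theta\,x+\xi_1+\sin\theta\,\xi_2$ and an arbitrary $y\in u^\bot$ as $\rho u+\nu y'+\kappa w$ (formulas (\ref{ort}), (\ref{ort01})) and reducing $H(u,y,u,y)$ to $\nu^2 g(\varphi u,\varphi u)F(u',y',u',y')$ with $u'=x+\xi_1\in N_\varphi(\xi_1)$, which vanishes by hypothesis $a)$; it then applies Lemma \ref{2pianidegeneri} on all of $T_pM$ to get $F=k\left(g(x,v)g(y,z)-g(y,v)g(x,z)\right)+g(S_*(x,y)v,z)-g(S^*(x,y)v,z)$, and finally kills $k$ using the second condition in (\ref{condF}). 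You instead never use Lemma \ref{2pianidegeneri}: you split $T_pM=\operatorname{Im}\varphi\oplus\ker\varphi$, annihilate every component of $G$ containing a $\xi_\alpha$ via (\ref{condF}) together with the curvature symmetries and the first Bianchi identity, and handle the $\operatorname{Im}\varphi$-block directly from $a)$ (only the ``untilted'' null directions $u=\xi_1+x$ are needed) followed by Gram--Schmidt and polarization in the positive definite space $\operatorname{Im}\varphi_p$. Both arguments are sound; the paper's buys brevity by reusing the prepared Lorentz lemma and the general null-vector parametrization at the cost of a delicate explicit computation, while yours is more elementary and uses strictly less of hypothesis $a)$, at the cost of the $\xi$-component bookkeeping and of explicitly verifying that $g(S_*\cdot,\cdot)-g(S^*\cdot,\cdot)$ is curvature-like and matches (\ref{condF}) (both routine, as you note). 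Two cosmetic remarks: your initial idea of applying Lemma \ref{2pianidegeneri} inside $\operatorname{Im}\varphi_p$ was rightly abandoned (that space is definite), and the leftover sentence about treating ``the $\varphi$-plane case as above'' is superfluous in your corrected argument, since $g(x,\varphi x)=0$ makes $\varphi$-planes ordinary orthogonal-pair planes already covered by your main step.
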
 

\begin{proof}
An easy computation, using Remark \ref{R1}, shows that $b)\Rightarrow a)$. 

Conversely, fix $p\in M$ and consider the curvature-like map $H$ such that, for any $x,y,z,v\in T_p M$, 
\begin{equation}\label{H}
H(x,y,v,z)=F(x,y,z,w)-g(S_*(x,y)v,z)+g(S^*(x,y)v,z).
\end{equation}
Condition $a)$ and Remark \ref{R1} imply that $H$ vanishes on any degenerate $2$-plane $span\{u,y\}$, for any $u\in N_\varphi(\xi_1)$ and $y\in u^\bot\cap\operatorname{Im}\varphi$.
We start proving that $H$ vanishes on any degenerate $2$-plane. 
To see this, let $u$ be a null vector of $T_p M$ as in (\ref{ort}) such that $\cos\theta\neq 0$. By the hypotheses and using (\ref{ort01}), for any $y\in u^\bot$ we have
\begin{align*}
g(S_*(u,y)u,y)&=\left(\rho g(\varphi u,\varphi u)+\nu g(\varphi u, \varphi y')\right)^2-g(\varphi u,\varphi u)\left(\rho^2g(\varphi u,\varphi u)+\nu^2 g(\varphi y',\varphi y')\right)\\
&=\rho^2 g(\varphi u,\varphi u)^2-\rho^2g(\varphi u,\varphi u)^2-\nu^2g(\varphi y',\varphi y')g(\varphi u,\varphi u)=-\nu^2g( y', y')g(\varphi u,\varphi u),\\
g(S^*(u,y)u,y)&=-\widetilde{\eta}(u)\widetilde{\eta}(u)g(y,y),\\
F(u,y,u,y)&=\nu^2F(u,y',u,y')+2\kappa \nu F(u,y',u,w)+\kappa^2F(u,w,u,w)=\nu^2\cos^2\theta F(x,y',x,y')\\
\quad &+(1-\sin \theta)^2(\nu^2g(y',y')+ \kappa^2) =\nu^2g(\varphi u, \varphi u)F(x,y',x,y')+\widetilde{\eta}(u)\widetilde{\eta}(u)g(y,y)\\
&=\nu^2g(\varphi u, \varphi u)F(u',y',u',y')-\nu^2g(\varphi u, \varphi u)g(y',y')+\widetilde{\eta}(u)\widetilde{\eta}(u)g(y,y),
\end{align*}
where $u'=x+\xi_1$ which belongs to $N_\varphi (\xi_1)$. Hence one obtains 
\begin{equation}\label{H1}
H(u,y,u,y)=\nu g(\varphi u, \varphi u)F(u',y',u',y'),
\end{equation}
with $u'=x+\xi_1$ and $y\in u^\bot \cap \operatorname{Im}\varphi$. 

If $\cos\theta=0$, then $u=\xi_1\pm \xi_2$ and $u^\bot=span\{u\}\oplus \operatorname{Im}\varphi$. By direct computation, it is easy to verify that
\begin{equation}\label{H2}
H(u,y,u,y)=0,
\end{equation}
for any $y\in u^\bot$.

Equations (\ref{H1}) and (\ref{H2}) clearly imply that $H$ vanishes on any degenerate $2$-plane. Applying Lemma \ref{2pianidegeneri} to $H$ one has
\begin{equation}\label{01}
F(x,y,v,z)=k \left(g(x,v)g(y,z)-g(y,v)g(x,z)\right)+g(S_*(x,y)v,z)-g(S^*(x,y)v,z).
\end{equation}
By definition of $k$, using the hypotheses and (\ref{H}), we deduce $$k=\frac{H(\xi_\alpha,x,\xi_\alpha,x)}{\varepsilon_\alpha g(x,x)}=\frac{F(\xi_\alpha,x,\xi_\alpha,x)-g(x,x)}{\varepsilon_\alpha g(x,x)}=0.$$ 
Then, substituting in (\ref{01}), we obtain our assertion. \end{proof}

\subsection{Main results}
Now, we consider the following two standard tensor fields of type $(1,3)$, evaluating them at the point $p$:
\begin{align*}
R^0(x,y)v&=g(\pi^I(y),\pi^I(v))\pi^I(x)-g(\pi^I(x),\pi^I(v))\pi^I(y),\\
R^J(x,y)v&=g\left(J( \pi^I(y)),\pi^I(v)\right)J (\pi^I(x))-g\left(J (\pi^I(x)),\pi^I(v)\right)J( \pi^I(y))\\
&\quad +2 g\left(\pi^I(x),J (\pi^I(y))\right)J( \pi^I(v)),
\end{align*}
where $\pi^I:T_pM\rightarrow \operatorname{Im}\varphi$ is the projection on $\operatorname{Im}\varphi$ and $J$ is an almost Hermitian structure on $\operatorname{Im}\varphi$.

It is useful to note that $R^J$ and $R^0$ vanish on the triplets containing a characteristic vector and that they are orthogonal to $\xi_1$ and $\xi_2$, for any $x,y,v\in T_p M$.

Now we are ready to prove the following result.

\begin{theorem}
Let $(M,\varphi,\xi_\alpha,\eta^\alpha,g)$, $\alpha\in\{1,2\}$ and $n>1$, be a $(2n+2)$-dimensional Lorentz $\mathcal{S}$-manifold with timelike vector field $\xi_1$. The following three statements are equivalent. 
\begin{itemize}
	\item [a)] $M$ is $\varphi$-null Osserman with respect to $\xi_1$ and for any $u\in N_\varphi (\xi_1)$ the Jacobi operator $\bar{R}_u|_{\operatorname{Im}\varphi\cap u^\bot}$ has exactly two distinct eigenvalues $c_1$ and $c_2$ with multiplicities $1$ and $2(n-1)$, respectively. 
	\item [b)] There exist an almost complex structure $J$ on $\operatorname{Im}\varphi_p$ and  $c_1, c_2\in \mathbb{R}$ such that, for any $x,y,v\in T_p M$, 
	\begin{equation*}
	R(x,y)v=S^*(x,y)v- S_*(x,y)v+c_2R^0(x,y)v+\frac{c_1-c_2}{3} R^J(x,y)v.
	\end{equation*}
	\item [c)]
	\begin{itemize}
	\item [1.] For any $v\in span\{\xi_1\}$, $x\in \xi_1^\bot$ we have $$R(x,v)v=(\eta^1(v))^2(x-\widetilde{\eta}(x)\xi_2).$$
	\item [2.] There exist an almost complex structure $J$ on $\operatorname{Im}\varphi_p$ and $c_1, c_2\in \mathbb{R}$ such that, for any $v,y,x\in \xi_1^\bot$, we have
\begin{align*}
R(x,y)v&=\eta^2(v)\left(\eta^2(y)x-\eta^2(x)y\right)+\left(g(y,v)\eta^2(x)-g(x,v)\eta^2(y)\right)\widetilde{\xi}\\
&\quad +g(\varphi y,\varphi v)\varphi^2 x -g(\varphi x,\varphi v)\varphi^2 y+c_2R^0(x,y)v+\frac{c_1-c_2}{3}R^J(x,y)v.
\end{align*}
\end{itemize}
\end{itemize}
\end{theorem}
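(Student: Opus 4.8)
The plan is to prove the cycle $a)\Rightarrow b)\Rightarrow c)\Rightarrow a)$, the implication $a)\Rightarrow b)$ carrying essentially all the content and the other two amounting to bookkeeping with the curvature identities already recorded.

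For $a)\Rightarrow b)$ fix $p\in M$. Since $\dim M=2n+2$ and $\xi_1$ is the timelike characteristic vector, $g$ is positive definite on $\operatorname{Im}\varphi_p$, so $S_\varphi(\xi_1)\cong S^{2n-1}$ and Remark \ref{GSV} applies verbatim: the $(c_1-1)$-eigenspaces of $\{R_x\}_{x\in S^{2n-1}}$ form a trivial line bundle whose global unit section gives, via Claims (a)--(d), an almost Hermitian structure $J$ on $\operatorname{Im}\varphi_p$ with $R_x$ acting as $c_1-1$ on $\mathbb{R}Jx$ and as $c_2-1$ on its orthogonal complement in $x^\bot\cap\operatorname{Im}\varphi_p$. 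Set $\widetilde R:=R-c_2R^0-\frac{c_1-c_2}{3}R^J$; the aim is to identify $\widetilde R$ through Lemma \ref{alcuni2pianidegeneri}. The tensors $R^0$ and $R^J$ are curvature-like --- for $R^J$ the first Bianchi identity is exactly what forces the coefficient $2$ in its last term (this is the tensor of \cite{GSV}) --- hence so is $\widetilde R$; and since $R^0,R^J$ vanish on every triple containing a characteristic vector and are $g$-orthogonal to $\xi_1,\xi_2$, the values of $\widetilde R$ on the tuples appearing in (\ref{condF}) coincide with those of $R$, which are supplied by (\ref{PropS}). Thus $\widetilde R$ satisfies the hypotheses of Lemma \ref{alcuni2pianidegeneri}.

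The crux is to verify that $\widetilde R$ vanishes on every degenerate plane $\operatorname{span}\{u,y\}$ with $u=\xi_1+x\in N_\varphi(\xi_1)$ and $y\in u^\bot\cap\operatorname{Im}\varphi$. Using (\ref{PropS}) and the standard $\mathcal S$-manifold identity $R(X,Y)\xi_\alpha=\varepsilon_\alpha\big(\widetilde\eta(X)\varphi^2Y-\widetilde\eta(Y)\varphi^2X\big)$, the orthogonality $g(y,u)=0$ reduces to $g(y,x)=0$, and one obtains $R(u,y,u,y)=g(y,y)+g(R_xy,y)$; splitting $y$ along $Jx$ and its orthogonal complement in $x^\bot\cap\operatorname{Im}\varphi_p$ and feeding in the eigenvalues of $R_x$ gives $R(u,y,u,y)=c_2\,g(y,y)+(c_1-c_2)g(y,Jx)^2$. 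On the other hand $\pi^I(u)=x$, $\pi^I(y)=y$ and $g(Jx,y)=-g(x,Jy)$ yield $R^0(u,y,u,y)=g(y,y)$ and $R^J(u,y,u,y)=3g(x,Jy)^2$, hence $c_2R^0(u,y,u,y)+\frac{c_1-c_2}{3}R^J(u,y,u,y)$ equals the same quantity --- the $\frac13$ together with the term $2g(\pi^Ix,J\pi^Iy)J\pi^Iv$ in $R^J$ being exactly calibrated for this. So $\widetilde R(u,y,u,y)=0$, Lemma \ref{alcuni2pianidegeneri} forces $\widetilde R(x,y,v,z)=g(S_*(x,y)v,z)-g(S^*(x,y)v,z)$, and rewriting this in the $(1,3)$-picture of the paper's convention $R(X,Y,Z,W)=g(R(Z,W)Y,X)$ turns $R=\widetilde R+c_2R^0+\frac{c_1-c_2}{3}R^J$ into the formula of b). I expect the main obstacle to be exactly this matching of the Osserman side against the $R^0,R^J$ side, together with the sign bookkeeping in passing between the $(0,4)$ and $(1,3)$ pictures.

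Finally, $b)\Rightarrow c)$ is substitution: statement $1$ from evaluating b) at $v\in\operatorname{span}\{\xi_1\}$, $x\in\xi_1^\bot$ (where $R^0,R^J$ drop out), statement $2$ from restricting b) to $\xi_1^\bot=\operatorname{Im}\varphi_p\oplus\operatorname{span}\{\xi_2\}$ and expanding $S^*,S_*$ via $\eta^2$, $\widetilde\eta$, $\widetilde\xi$, $\varphi^2$. For $c)\Rightarrow a)$ I first recover b): since $T_pM=\operatorname{Im}\varphi_p\oplus\operatorname{span}\{\xi_1,\xi_2\}$, the components of $R$ with all arguments in $\xi_1^\bot$ are given by c2), those involving $\xi_1$ by the curvature symmetries, (\ref{PropS}) and $R(\cdot,\cdot)\xi_1$ (with c1) confirming the values of $R(\cdot,\xi_1)\xi_1$); these assemble into b). From b) I then run the computation in the proof of Theorem \ref{S-form} with the extra $R^J$ term present: for $u=\xi_1+x\in N_\varphi(\xi_1)$, $\bar R_u$ has eigenvalue $c_1$ on $\overline{Jx}$, eigenvalue $c_2$ on the $(2n-2)$-dimensional orthogonal complement in $\overline{\operatorname{Im}\varphi\cap u^\bot}$, and eigenvalue $1$ on the remaining $\overline{\xi_2}$-direction; as this is independent of $u$, $M$ is $\varphi$-null Osserman with exactly the multiplicities of a) (here $c_1\neq c_2$: if $c_1=c_2$, b) reduces to $\varphi$-sectional flatness, which is excluded by a)'s requirement of two distinct eigenvalues).
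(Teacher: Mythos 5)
Your proposal is correct and, for the substantial implication $a)\Rightarrow b)$ and for $b)\Rightarrow c)$, it is essentially the paper's own argument: Remark \ref{GSV} supplies $J$, one forms the curvature-like combination of $R$, $R^0$, $R^J$, checks the hypotheses (\ref{condF}) via (\ref{PropS}), verifies the vanishing on the degenerate planes $span\{u,y\}$ through the eigenstructure of $\bar R_u$ (you plug in the coefficients $c_2$ and $\tfrac{c_1-c_2}{3}$ and verify, whereas the paper carries unknowns $\mu,\tau$ and solves; your direct computation $R(u,y,u,y)=c_2g(y,y)+(c_1-c_2)g(y,Jx)^2$ is the same calculation done on general $y$ at once), and concludes with Lemma \ref{alcuni2pianidegeneri}. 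Two points where you deviate are worth noting. First, the ``sign bookkeeping'' you defer is genuinely delicate here: with the paper's convention $R(X,Y,Z,W)=g(R(Z,W)Y,X)$ one has $R(x,y,v,z)=-g(R(x,y)v,z)$, so whether your $\widetilde R=R-c_2R^0-\frac{c_1-c_2}{3}R^J$ satisfies (\ref{condF}) as written, and whether the lemma's output comes out as $S^*-S_*$ rather than $S_*-S^*$, depends on reading all four $(0,4)$-forms in the same picture; this is exactly why the paper's $F$ carries plus signs on $\mu R^0+\tau R^J$ while $b)$ ends with $+c_2R^0+\frac{c_1-c_2}{3}R^J$. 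Second, for $c)\Rightarrow a)$ the paper does not reconstruct $b)$: it computes $\bar R_u$ on $\overline{Jx_1}$, on $\overline{y}_2$ and on $\overline{\xi}_2$ directly from $c)1.$, $c)2.$ and the curvature symmetries, which is shorter than your detour; your route works, but the step ``these assemble into $b)$'' needs an explicit check that the $\xi_1$-components forced by the $\mathcal{S}$-structure (the identity $R(X,Y)\xi_\alpha=\varepsilon_\alpha(\widetilde\eta(X)\varphi^2Y-\widetilde\eta(Y)\varphi^2X)$ and (\ref{PropS})) coincide with the values of the formula in $b)$ on arguments containing $\xi_1$, a finite but nonempty verification that you assert rather than perform. (The issue you raise about $c_1\neq c_2$ is real but is present in the paper's statement as well.)
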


\begin{proof}
We begin proving $a)\Rightarrow b)$. Under the assumption \emph{a)} by Remark \ref{GSV} we know that $\operatorname{Im}\varphi_p$ is endowed with an almost complex structure $J$ such that $Jx$ is an eigenvector of $\bar{R}_u$ related to the eigenvalue $c_1$. To prove \emph{b)}, we consider the curvature-like map $F$ on $T_pM$ given by
\begin{equation}\label{F}
F(x,y,v,z)=R(x,y,v,z)+\mu g(R^0(x,y)v, z)+ \tau g(R^J(x,y)v,z),
\end{equation}
where $\mu,\tau\in \mathbb{R}$.

We want to apply Lemma \ref{alcuni2pianidegeneri} to $F$. About the hypotheses of Lemma \ref{alcuni2pianidegeneri}, we see at once that $F$ satisfies (\ref{condF}) since $F=R$ if one of its four arguments is a characteristic vector and (\ref{PropS}) hold. Thus we must only compute $F(u,y,u,y)$, for any degenerate vector $u\in N_\varphi(\xi_1)$ and $y\in u^\bot \cap \operatorname{Im}\varphi$.  

Namely, considering a null vector $u\in N_\varphi(\xi_1)$ and a vector $y\in u^\bot \cap \operatorname{Im}\varphi$, we find the suitable values of $\mu$ and $\tau$ in $\mathbb{R}$ for which $F$ vanishes on degenerate $2$-plane $\pi=span\{u,y\}$.

Putting $y_1=J x_1\in u^\bot$, one computes  
\begin{align}
	F(y_1,u,u,y_1)&=-g(R(y_1,u)u,y_1)+\mu g(R^0(y_1,u)u,y_1)+\tau g(R^J(y_1,u)u,y_1)\label{y_1}\\
	&=-c_1+\mu+3\tau\nonumber.
\end{align}
Analogously, if $y_2$ and $y'_2$ are orthonormal eigenvectors of $\bar{R}_u$ with respect to the eigenvalue $c_2$, then we have
\begin{align}
	F(y_2,u,u,y_2)&=-g(R(y_2,u)u,y_2)+\mu g(R^0(y_2,u)u,y_2)+\tau g(R^J(y_2,u)u,y_2))\label{y_2}\\
	&=(-c_2+\mu),\nonumber \\	
	F(y_2,u,u,y'_2)&=-g(R(y_2,u)u,y'_2)+\mu g(R^0(y_2,u)u,y'_2)+\tau g(R^J(y_2,u)u,y'_2))=0,\label{yy_2}\\
	F(y_2,u,u,y_1)&=-g(R(y_2,u)u,y_1)+\mu g(R^0(y_2,u)u,y_1)+\tau g(R^J(y_2,u)u,y_1))=0.\label{y_1y_2}	
\end{align}

Now, imposing $F=0$, we get
\begin{equation}\label{sol}
\mu=c_2 \text{ and } \tau=\frac{c_1-c_2}{3}.
\end{equation}
So, since a vector $y$ of $u^\bot \cap \operatorname{Im}\varphi$ can be written as $y=a y_1+b_j y_2^j$, where $y_1$ and $y_2^j$ are eigenvectors of $\bar{R}_u$ in $u^\bot\cap\xi_1^\bot$ corresponding to $c_1$ and $c_2$, respectively. By (\ref{y_1}), (\ref{y_2}), (\ref{yy_2}) and (\ref{y_1y_2}) we have 
\begin{align*}
F(y,u,u,y)&=a^2F(y_1,u,u,y_1)+ab_j F(y_1,u,u,y_2^j)+ab_k F(y_2^k,u,u,y_1)\\
&\quad +b_kb_jF(y_2^k,u,u,y_2^j)=0.\nonumber
\end{align*}
Therefore, applying Lemma \ref{alcuni2pianidegeneri}, we obtain $F(x,y,v,z)=g(S_*(x,y)v,z)-g(S^*(x,y)v,z)$, for any $x,y,v,z\in T_p M$. Then, by (\ref{F}) and (\ref{sol}), we get
\begin{equation*}
R(x,y,v,z)=g(S_*(x,y)v,z)-g(S^*(x,y)v,z)-c_2 g(R^0(x,y)v,z)-\frac{c_1-c_2}{3} g(R^J(x,y)v,z). 
\end{equation*}
Thus, one obtains
\begin{equation*}
R(x,y)v=-S_*(x,y)v+S^*(x,y)v+c_2 R^0(x,y)v+\frac{c_1-c_2}{3}R^J(x,y)v. 
\end{equation*}

The proof $b)\Rightarrow c)$ is straightforward. In fact, for any $v\in span\{\xi_1\}$, $x\in \xi_1^\bot$, we have 
\begin{align*}
	R(x,v)v&= S^*(x,v)v=(\eta^1(v))^2(x+\widetilde{\eta}(x)\xi_1+\varepsilon_1\widetilde{\eta}(x)\widetilde{\xi})=(\eta^1(v))^2\left(x-\widetilde{\eta}(x)\xi_2\right),
\end{align*}
so obtaining \emph{c)1.}
 
For any $v,y,x\in\xi_1^\bot$, by \emph{b)} one gets
\begin{align*}
R(x,y)v&=\eta^2(y)\eta^2(v)x-\eta^2(x)\eta^2(v)y+(g(y,v)\eta^2(x)-g(x,v)\eta^2(y))\widetilde{\xi}\\
&\quad+(-S_*+c_2R^0+\frac{c_1-c_2}{3}R^J)(x,y)v,
\end{align*}
that is \emph{c)2.}

Finally, we prove $c)\Rightarrow a)$. Consider $u\in N(\xi_1)$, $u=\xi_1+x_1$ and put $y_1=J x_1$. One has
\begin{equation*}
R(y_1,u)u=R(y_1,\xi_1)\xi_1+R(y_1,x_1)\xi_1+R(y_1,\xi_1)x_1+R(y_1,x_1)x_1.
\end{equation*}
So, using \emph{c)1.} and \emph{c)2.}, we have
\[
R(y_1,\xi_1)\xi_1=y_1\quad \text{ and } \quad R(y_1,x_1)x_1=(c_1-1)y_1.
\]
By \emph{c)2.}, for any $v\in \xi_1^\bot$, it is clear that
\[
	g(R(y_1,x_1)\xi_1,v)=-g(R(y_1,x_1)v,\xi_1)=0,\quad g(R(y_1,\xi_1)x_1,v)=g(R(x_1,v)y_1,\xi_1)=0.
\]
On the other hand, if $v=\xi_1$, then
\[
	g(R(y_1,x_1)\xi_1,\xi_1)=0,\quad g(R(y_1,\xi_1)x_1,\xi_1)=-g(y_1,x_1)=0.
\]
Hence, $\bar{R}_u(\overline{y}_1)=c_1\overline{y}_1$.

Analogously, considering $y_2\in (span\{x_1,y_1\})^\bot\cap \operatorname{Im}\varphi$, then 
\begin{equation*}
R(y_2,u)u=R(y_2,\xi_1)\xi_1+R(y_2,x_1)\xi_1+R(y_2,\xi_1)x_1+R(y_2,x_1)x_1.
\end{equation*}
As for $y_1$, using \emph{c)}, it is easy to check that $R(x_1,v)y_2=0$ and $R(y_2,x_1)v=0$. Moreover, applying \emph{c)1.}, we get
$$R(y_2,\xi_1)\xi_1=y_2.$$
The relation \emph{c)2.} implies
\begin{align*}
	R(y_2,x_1)x_1&=(c_2-1)y_2.
\end{align*}
Therefore we have $\bar{R}_u(\overline{y}_2)=c_2\overline{y}_2$.

Finally, to prove the $\varphi$-null Osserman condition, we have to check that every eigenvalue does not depend on $u\in N_\varphi(\xi_1)$. In fact, by \emph{c)} we find
\begin{align*}
	R(\xi_2,\xi_1)\xi_1&=0, \\
  R(\xi_2,x_1)x_1&=g(x_1,x_1)\widetilde{\xi}=\xi_1+\xi_2.\nonumber
\end{align*}
It is easy to see that, for any  $v\in \xi_1^\bot $
\begin{align*}
g(R(\xi_2,\xi_1)x_1,v)+g(R(\xi_2,x_1)\xi_1,v)&=-2g(R(\xi_2,x_1)v,\xi_1)+g(R(\xi_2,v)x_1,\xi_1)\\
	&=2g(x_1,v)-g(x_1,v)=g(x_1,v).
\end{align*}
Moreover, since
\[	
g(R(\xi_2,\xi_1)x_1,\xi_1)+g(R(\xi_2,x_1)\xi_1,\xi_1)=-g(R(\xi_2,\xi_1)\xi_1,x_1)=0,
\]
one obtains $R(\xi_2,\xi_1)x_1+R(\xi_2,x_1)\xi_1= x_1$. Then one gets $R(\xi_2,u)u=\xi_2+\xi_1+x_1=\xi_2+u$, so $\bar{R}_u(\overline{\xi}_2)=\overline{\xi}_2$. This proves \emph{a)}.

This concludes the proof. \end{proof}

\begin{remark}
Since $R$ has to satisfy the last formula in (\ref{PropS}), for any $x,y,v,z\in \operatorname{Im}\varphi$ one gets
\begin{equation}\label{link}
	(1-c_2)P(x,y;v,z)+\frac{c_1-c_2}{3}\left( g(R^J(x,y)\varphi v,z)+g(R^J(x,y)v,\varphi z)\right)=0.
\end{equation}
If $\varphi x_1$ is an eigenvector of $\bar{R}_u$, with $u=\xi_1+x_1\in N_\varphi(\xi_1)$, related to the eigenvalue $c_1$, then $\varphi=\pm J$ and (\ref{link}) yields $c_1-4c_2+3=0$. 
\end{remark}

By Theorem \ref{S-form}, it is a simple matter to prove the following result in the particular case of the Jacobi operator with exactly one eigenvalue.

\begin{proposition}
Let $(M,\varphi,\xi_\alpha,\eta^\alpha,g)$, $\alpha\in\{1,2\}$, $n>1$ be a $(2n+2)$-dimensional Lorentz $\mathcal{S}$-manifold with timelike vector field $\xi_1$. Then $M$ is $\varphi$-null Osserman with respect to $\xi_1$, and the Jacobi operator $\bar R_u|_{u^\bot\cap\operatorname{Im}\varphi}$ has a single eigenvalue $\lambda$, if and only if it is a Lorentz $\mathcal{S}$-space form with $\varphi$-sectional curvature $c=0$. Moreover, $\lambda=1$.
\end{proposition}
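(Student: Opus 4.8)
The plan is to prove the two implications of the equivalence separately. The ``if'' direction is an immediate consequence of Theorem~\ref{S-form}, while the ``only if'' direction combines the eigenvector correspondence recalled in Remark~\ref{GSV} with the explicit computation carried out in the proof of Theorem~\ref{S-form}.

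For the direction $\Leftarrow$, assume $M$ is a Lorentz $\mathcal{S}$-space form with $c=0$. By Theorem~\ref{S-form} (here $s=2$, hence $\varepsilon=0$) $M$ is $\varphi$-null Osserman with respect to $\xi_1$. Moreover, the proof of Theorem~\ref{S-form} shows that along the directions of $\operatorname{Im}\varphi$ the operator $\bar R_u$ has the eigenvalue $c+1$ on the line spanned by $\overline{\varphi x}_1$ and the eigenvalue $\frac{c+3s-2}{4}=\frac{c+4}{4}$ with multiplicity $2n-2$ on the remaining directions $\bar x_3,\dots,\bar x_{2n}$. Since $n>1$ this second eigenspace is non-trivial, and for $c=0$ both numbers equal $1$; hence $\bar R_u$ restricted to the $(2n-1)$-dimensional space $u^\bot\cap\operatorname{Im}\varphi$ acts as the identity and has the single eigenvalue $\lambda=1$.

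For the direction $\Rightarrow$, suppose $M$ is $\varphi$-null Osserman with respect to $\xi_1$ and that, for each $u\in N_\varphi(\xi_1)$, the operator $\bar R_u|_{u^\bot\cap\operatorname{Im}\varphi}$ has the single eigenvalue $\lambda$. Fix $p\in M$ and a unit vector $x_1\in S_\varphi(\xi_1)$, and set $u=\xi_1+x_1$. By the correspondence of Remark~\ref{GSV}, a vector $v\in x_1^\bot\cap\operatorname{Im}\varphi$ is an eigenvector of $R_{x_1}$ with eigenvalue $\nu$ if and only if $\bar v$ is an eigenvector of $\bar R_u$ with eigenvalue $\nu+1$; hence the hypothesis forces $R_{x_1}$ to act as $(\lambda-1)\,\mathrm{id}$ on $x_1^\bot\cap\operatorname{Im}\varphi$. (Alternatively this is obtained by computing $g(R(\varphi x_1,u)u,\varphi x_1)$ directly from~\eqref{PropS}, the cross terms involving $\xi_1$ vanishing exactly as in the proof of Theorem~\ref{S-form}.) In particular $\varphi x_1$ is a unit eigenvector of $R_{x_1}$ in $x_1^\bot\cap\operatorname{Im}\varphi$ with eigenvalue $\lambda-1$, so $H(x_1)=K(x_1,\varphi x_1)=\lambda-1$. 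Letting $x_1$ and $p$ vary, $M$ has constant $\varphi$-sectional curvature $c=\lambda-1$, i.e.\ $M$ is a Lorentz $\mathcal{S}$-space form; so~\eqref{equivalente} now holds and the eigenvalue computation in the proof of Theorem~\ref{S-form} applies again, giving on $u^\bot\cap\operatorname{Im}\varphi$ the eigenvalues $c+1$ (multiplicity $1$) and $\frac{c+4}{4}$ (multiplicity $2n-2$). As $n>1$ the second eigenvalue is genuinely attained, so the single-eigenvalue hypothesis forces $c+1=\frac{c+4}{4}$, that is $3c=0$; hence $c=0$ and $\lambda=c+1=1$.

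The only delicate point is the transfer of information from $\bar R_u$ to the $\varphi$-sectional curvature, namely that $\varphi x_1$ carries precisely the predicted eigenvalue $\lambda-1$; this is exactly the content borrowed from Remark~\ref{GSV} (a bookkeeping of the $\xi_1$-cross terms in~\eqref{PropS}), and once $M$ is known to be an $\mathcal{S}$-space form the rest is short algebra. It is worth stressing that the hypothesis $n>1$ is indispensable: for $n=1$ the space $u^\bot\cap\operatorname{Im}\varphi$ is one-dimensional, so the single-eigenvalue condition is vacuous and the statement fails — for instance $U(2)$ has $c=4$ and yet $\bar R_u|_{u^\bot\cap\operatorname{Im}\varphi}$ trivially has one eigenvalue.
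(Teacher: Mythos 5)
Your argument is correct and follows exactly the route the paper intends: the paper leaves this proposition as ``a simple matter'' following from Theorem~\ref{S-form}, and you supply precisely that — the ``if'' part by reading off the eigenvalues $c+1$ and $\tfrac{c+4}{4}$ (with $s=2$) from the proof of Theorem~\ref{S-form}, and the ``only if'' part by identifying the single eigenvalue with $H(x_1)+1$ via the eigenvector correspondence (or the direct computation of $g(R(\varphi x_1,u)u,\varphi x_1)$ using~\eqref{PropS}) and then forcing $c+1=\tfrac{c+4}{4}$, i.e.\ $c=0$ and $\lambda=1$, where $n>1$ guarantees the second eigenvalue is actually attained. Your closing remark on the failure for $n=1$ (e.g.\ $U(2)$ with $c=4$) is also consistent with the paper's discussion.
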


Now we end dealing with the case $n=1$, which is a special case because it is clear that any $4$-dimensional Lorentz $g.f.f$-manifold is $\varphi$-null Osserman with respect to $\xi_1$. More precisely, for any $u=\xi_1+x_1\in N_\varphi(\xi_1)$ the only eigenvector of the Jacobi operator $\bar{R}_u|_{u^\bot\cap\operatorname{Im}\varphi}$ is realized geometrically by $\varphi x_1$ in $u^\bot\cap\xi_1^\bot$. Unlike before, the eigenvalue of the Jacobi operator does not necessarily have to be one, as in the case of $U(2)$, but, when it is one, the $\varphi$-sectional curvature will be zero. About this case we have a non compact example. It is carried out by $\mathbb{R}^4$ endowed with the Lorentz $\mathcal{S}$-structure, constructed as follows (\cite{LP}). Denoting the standard coordinates with $\{x,y,z^{1},z^{2}\}$, we define on $\mathbb{R}^{4}$ two vector fields and two $1$-forms putting 
\[
\xi_{\alpha}=\frac{\partial}{\partial z^{\alpha}}, \quad \eta^{\alpha}=dz^{\alpha}+ydx,
\]
for any $\alpha\in\{1,2\}$. The tensor fields $\varphi$ and $g$ are given in the standard basis by
\begin{equation*}
F:=\left(
\begin{array}{cccc}
0&-1&0&0\\
1&0&0&0\\ 
0&y&0&0\\
0&y&0&0
\end{array}
\right)\qquad
G:=\left(
\begin{array}{cccc}
\frac{1}{2}&0&-y&y\\
0&\frac{1}{2}&0&0\\ 
-y&0&-1&0\\
y&0&0&1
\end{array}
\right)
\end{equation*}
respectively. It is easy to check that $(\mathbb{R}^{4},\varphi,\xi_{\alpha},\eta^{\alpha},g)$, $\alpha\in\{1,2\}$, is a Lorentz $\mathcal{S}$-manifold with different causal type of the characteristic vector fields. Moreover it is a Lorentz space form with $\varphi$-sectional curvature $c=0$. Therefore, by (\ref{equivalente}), one obtains
\begin{align*}
	{R}(X,Y,V)&=\widetilde{\eta}(X){g}({\varphi}V,{\varphi}Y)\sum_{\alpha=1}^2\xi_\alpha -\widetilde{\eta}(Y)
{g}({\varphi}V,{\varphi}X)\sum_{\alpha=1}^2\xi_\alpha-\widetilde{\eta}(Y) \widetilde{\eta}(V){\varphi}^2X\nonumber\\
&  \quad+\widetilde{\eta}(V)  \widetilde{\eta}(X) {\varphi}^2Y,\nonumber
\end{align*}
for any $X,Y,V\in \mathfrak{X}(\mathbb{R}^4)$.
Since $\operatorname{Im}\varphi=\left\langle X,Y\right\rangle$ where $X=\sqrt{2}(\frac{\partial}{\partial x}-y\xi_1-y\xi_2)$ and $Y=\sqrt{2}\frac{\partial}{\partial y}$, one has
\[
\bar{R}_u \varphi Z= \varphi Z, \quad \bar{R}_u \xi_2= \xi_2,
\]  
for any $Z=aX+bY$ and $u=\xi_1+ Z$ where $a^2+b^2=1$. Then the only eigenvalue of $\bar{R}_u$, $u\in N_\varphi(\xi_1)$, is $1$.

\end{document}